\theoremstyle{plain} 
\newtheorem{theorem}{Theorem}[section] 
\newtheorem{proposition}[theorem]{Proposition} 
\newtheorem{lemma}[theorem]{Lemma}
\theoremstyle{definition} 
\theoremstyle{remark} 
\newtheorem{remark}[theorem]{Remark}
\renewenvironment{proof}[1][Proof]{\noindent\textbf{#1.} }{\hfill$\square$\vspace{5pt}}
\newcommand{\edgin}{\mathcal{E}_{\text{in}}}
\newcommand{\edgbd}{\mathcal{E}_{\text{bd}}}
\newcommand{\cof}{\operatorname{cof}}
\newcommand{\wexact}{w^*}
\newcommand{\vexact}{v^*}
\renewcommand{\O}{\Omega}
\newcommand{\uv}{v}
\newcommand{\uw}{w}
\title{Kinematically incompatible F\"oppl-von K\'arm\'an  plates: analysis and numerics   }
\author{Edoardo Fabbrini%
\footnote{Graduate School of Mathematics, Kyushu University, Japan 
 \url{fabbrini.edoardo.840@s.kyushu-u.ac.jp}}, 
Andrés A. León Baldelli%
\footnote{
Sorbonne Université, CNRS,
Institut Jean Le Rond d’Alembert, \url{leon.baldelli@cnrs.fr}
}, 
Pierluigi Cesana%
\footnote{Institute of Mathematics for Industry, Kyushu University, Japan \url{cesana@math.kyushu-u.ac.jp}
}}
\begin{document}

\ifpdf
	\DeclareGraphicsExtensions{.pdf, .jpg, .tif}
\else
	\DeclareGraphicsExtensions{.eps, .jpg}
\fi

\maketitle


\begin{abstract}
	We investigate thin plates where out-of-plane deformations arise due to membrane kinematic incompatibility of rotational type, specifically Volterra wedge disclinations, which are commonly observed in metal plates and graphene.
	We present theorems that guarantee the existence and regularity of equilibrium solutions in the presence of a finite number of disclinations and a dead load, for clamped plates.
	To solve the equilibrium equations, we implement a numerical code in the FEniCS environment and apply it to a series of parametric test studies. Our Finite Element method follows the Discontinuous Galerkin approach with
	$C^0$  elements.
\end{abstract}

\textbf{Keywords:} F\"oppl-von K\'arm\'an equations, Variational analysis, Discontinuous Galerkin methods, Disclination.

\tableofcontents

\section{Introduction}

The two-dimensional F\"oppl-von K\'arm\'an equations (FvK), originally developed by L. F\"oppl \cite{foppl} and T.  von K\'arm\'an
\cite{vonkarman}, provide a modeling framework for describing the behavior of thin, nonlinear elastic plates subjected to bending and stretching \cite{ciarlet97}, \cite{ciarlet80}.
They have long been considered the classical model in structural mechanics  to describe  moderately large deflections  in thin shells and plates, where the deflection is large compared to the plate's thickness.
The FvK model couples the large transverse (out-of-plane) displacement with the in-plane membrane (stretching) displacements, leading to a nonlinear relationship between strain and displacement components. This coupling allows for the modeling of geometric nonlinearity, including   moderate  deformations and rotations. Mathematically, the model is described by two fourth-order partial differential equations with two unknown scalar fields: the out-of-plane displacement (deflection) and the Airy stress function (or potential) of the system. The stress components are linearly related to the strain components and can be reconstructed by taking the second-order derivatives of the Airy potential.

The objective of this paper is to present a platform of both analytical results and numerical calculations for von K\'arm\'an  plates in relevant configurations, where out-of-plane displacements are induced by kinematic incompatibilities at the lattice level of the material, specifically in the form of wedge disclinations.

Wedge disclinations, defined by the corresponding Frank angle, represent classical lattice asymmetries first introduced by Volterra  in his celebrated paper \cite{V07}. Alongside dislocations,
disclinations play a key role in the plastic behavior     of metal alloys \cite{Orowan1934}, \cite{Polanyi1934}, \cite{ET67}, \cite{TE68}  and elastic crystals
\cite{CPL14}, \cite{I19}, \cite{IHM13}. They are
also  observed in graphene \cite{graphene}, \cite{Banhart11}, \cite{ARIZA10}.
The effect of these disclinations is to distort the crystal lattice by creating rotational asymmetries, leading to local curvature of the graphene sheet. This   can manifest as positive Gaussian curvature (caused by a positive Frank angle disclination, resulting in a small tent-like deformation) or negative Gaussian curvature (caused by a negative  disclination, leading to a small saddle-like deformation) \cite{liu10}.

Our main results are as follows. From the analytical standpoint, we present  (Section \ref{2501041121}) theorems that guarantee the existence and regularity of solutions to Dirichlet-type boundary value problems for the FvK equations in the presence of both dead loads (defined by a smooth forcing term for the deflection equation)  and kinematic incompatibilities (forced by a finite collection of Dirac deltas for the Airy potential equation).
Our results exploit the variational structure of the FvK equations and are derived by extending and generalizing the classical analysis of Ciarlet \cite{ciarlet97}, \cite{ciarlet80} for the case of perfect kinematic compatibility to incorporate membrane inelastic strains.
Unlike dislocations, which are topological defects accompanied by singular mechanical energy, disclinations are regarded as simple rotational asymmetries. As such, they have finite elastic energy, though they give rise to non-trivial mechanical configurations and are characterized by singular stresses.
Consequently, the mathematical analysis of the FvK equations in the presence of disclinations is based on the fact that the corresponding energy functionals and equations are non-singular, leading to well-posed, though non-convex, minimization problems in the calculus of variations.

From a numerical standpoint, we present a Finite Element formulation in Section \ref{sec:202501011737} and provide examples of numerical solutions in Section \ref{2501041129} for these boundary value problems.
Our work builds upon the codes developed in \cite{brunetti}, which we extend by incorporating point sources at the level of the Airy stress function equation to represent the presence of wedge disclinations.
Since we are dealing with fourth-order scalar elliptic equations, the choice of Finite Element class is a delicate issue.
We adopt the $C^0$ Discontinuous Galerkin interior penalty method developed in \cite{brunetti} for kinematically compatible (i.e., without disclinations) FvK plates and we extend it to incorporate the disclinations.
Section $\ref{sec:IPCDformulation}$ provides a concise and self-contained excursus on the penalty method for $C^0$   discontinuous elements. A comparison with other available approaches is presented in Section $\ref{2501041124}$.
The method we propose is fully variational, in the sense that it is based on a system of field equations and penalty/boundary integral terms that can be derived directly as the Euler-Lagrange equations of   a suitable functional.
The main advantage of our formulation lies in its flexibility: it can be easily generalized to incorporate additional loading and coupling terms, as long as they have a variational formulation, by simply adding them to the system functional.
Although in this contribution we focus on Dirichlet problems for both the Airy potential and deflection, other types of boundary conditions can be modeled by suitably adapting our codes.

The numerical modeling and mathematical analysis developed in this paper can be applied to model general systems, including both metallic and elastic-crystal plates with rotational kinematic incompatibilities.
In Section \ref{2501041148}, we present parametric simulations aimed at illustrating the properties of the model in dependence of relevant parameters. We focus on  configurations relevant to graphene in Section \ref{2501041149}.

\subsection{Literature review and comparison with existing work}

We refer to the work of Nelson's group, see \cite{SN88}, \cite{nelson} for the modeling of lattice defects in FvK plates, as occurring in flexible crystalline materials.
More recently, a general continuum theory of defects for FvK surfaces has been proposed in \cite{singh22}, along with the  construction of   special configurations involving disclinations and dislocations (see also \cite{D2SM00126H} and \cite{Pandey22}).
In our work, we provide analytical tools to rigorously justify the existence of such configurations in general, smooth enough, domains, including a precise characterization of the regularity properties of these solutions.

The particular case of isolated positive disclinations has been studied both analytically and numerically in \cite{singh21}, as well as in more general contexts within non-linear elasticity, including 3D non-linear elasticity, fully non-linear plates, and FvK plate models in \cite{Olbermann17} (see also \cite{Kupferman15}).

The mathematical investigation of disclinations and other lattice misfits, as well as topological defects, has captured the attention of mechanical modelers and analysts. See \cite{W68} for linear theories and \cite{Z97} for non-linear theories for disclinations. Mathematical analysis of models for finite systems of disclinations has been performed in \cite{CDLM24}. 
Other recent modeling approaches, based on the concept of  g.disclinations, a general method to treat phase transformations, grain boundaries, and other plasticity mechanisms, are presented  in \cite{Acharya15}, \cite{ZA18}, and \cite{Zhang18}. For an approach in nonlinear mechanics using differential geometry, we refer to \cite{Yavari13}. Additional modeling insights and analytical results on dislocations can be found in  \cite{DeLucaGarroniPonsiglione12}.

For more numerical simulations of graphene configurations  aimed at studying their mechanical properties in the presence of disclinations, based on an adaptation of the FvK equations, we refer to \cite{gao14} and references cited therein.
For more sophisticated models aimed at bridging the gap between continuum models and the quantum scale, including atomic-to-continuum limits,
we refer to
\cite{DAVINI17},
\cite{Davini18},
\cite{DAVINI19},
\cite{kotani}, and
\cite{Chen11}.

While we model plates as 2D structures with 3D deformations according to the FvK plate theory, we emphasize that a significant body of work has appeared over the years on the modeling of von K\'arm\'an plates in the presence of incompatible or inelastic strains, starting from 3D elasticity and applying 2D asymptotics in the regime of small thickness, see
\cite{Friesecke},
\cite{Bhattacharya16},
\cite{Lewicka11},
\cite{Lewicka14}.
There is also important work on modeling pre-strains simulating growth \cite{DeS19}, swelling \cite{DeS20}, and other related phenomena \cite{DeS22}, which all necessitate formulating the problem starting from the 3D model.
Research on the 3D-to-2D reduction for active materials, where shape changes and actuation are driven by the interplay of micro-scale strains induced by functional units coupled with large-scale strains, is presented in \cite{kruzik20}, \cite{DeS02}, \cite{DeS20b}, \cite{DeS17}.


\subsection{Notation}
We adopt standard notation for Lebesgue and Sobolev spaces (\cite{ciarlet97}). For any $\Omega \subset \mathbb{R}^2$, $p \ge 1$ and $k \in \mathbb{N}$ we use $W^{k, p}(\Omega) := \{f \in L^p(\Omega) \text{ s.t. } \tfrac{\partial^{|a|} f}{\partial x_1^{a_1} \partial x_2^{a_2}} \in L^p(\Omega) $ for every multi-index $ a $ with $ |a|\le k \}$. For $p=2$  we adopt the special notation $W^{k, 2}(\Omega) = H^k(\Omega)$. We use $H_0^2(\Omega)$ to denote the space of $H^2(\Omega)$ with vanishing trace, i.e. $H_0^2(\Omega) := \{f \in H^2(\Omega) \text{ s.t. } f = \partial_n f = 0 \text{ on } \partial \Omega \}$ (the boundary values of $f$ and $\partial_n f$ are to be intended in the sense of the trace operator where $n$ is the outer unit normal to $\partial \Omega$). $H^{-2}(\Omega)$ denotes the topological dual space of $H^2_0(\Omega)$ and $\langle \cdot, \cdot \rangle$ the duality pairing $H^{2}_0(\Omega)$-$H^{-2}(\Omega)$.
We denote   by $\nabla^k f$ the application of $k$ successive gradients, by $\Delta f $ the laplacian and by $\Delta^2 f:=\Delta\Delta f $ the  bilaplacian of $f$, respectively.
We indicate with  $[f, g] := \tfrac{\partial^2 f}{\partial x_1^2} \tfrac{\partial^2 g}{\partial x_2^2} + $ $\tfrac{\partial^2 f}{\partial x_2^2} \tfrac{\partial^2 g}{\partial x_1^2} $ $-2 \tfrac{\partial^2 f}{\partial x_1 \partial x_2} \tfrac{\partial^2 g}{\partial x_1 \partial x_2}$ the Monge-Amp\`ere operator. Observe that, by introducing the $2\times 2$ cofactor matrix, we also have $[f, g] =\cof(\nabla^2 f) : \nabla^2 g$,
where $:$ is used to express the Frobenius inner product. It is immediate to verify that $[f,f] = 2\text{det}(\nabla^2 f)$, and hence $[f,f](x)$ is twice the Gaussian curvature of the surface $(x, f(x))$ at $x \in \Omega$. \color{black}
We denote with  $(\cdot, \cdot)_{\Delta}$  the operator acting from $H^2(\Omega) \times H^2(\Omega) \to \mathbb{R}$ such that
\begin{equation}
	(f, g)_{\Delta} := \int_{\Omega} \Delta f \Delta g \, dx.
\end{equation}
Finally, we recall the fundamental relation between $\sigma$, the $2\times 2 $ mechanical stress tensor and the Airy stress function $v$ given by
 $\sigma = \cof(\nabla^2 v)$. Componentwise, this reads  $\sigma_{11}=v_{x_2x_2}$,
 $\sigma_{22}=v_{x_2x_2}$ and $\sigma_{12}=\sigma_{21}=-v_{x_1x_2}$.
 Additional notation used throughout the paper is summarized in Table \ref{table:2024}.

\begin{table}[h!]
	\centering
	\label{table:notation}
	\begin{tabular}{llc}
		\toprule
		\textbf{Symbol}                                           & \textbf{Description}                         & \textbf{Units}  \\
		\midrule
		$\Omega $                                                 & Reference configuration                      & m$^2$           \\
		$x = (x_1, x_2)$                                          & Point in $\Omega$                            & m               \\

		$w, w_o $                                                 & Transverse displacement, scale gauge         & m               \\
		$v, v_o $                                                 & Airy stress function, scale gauge            & Nm              \\

		$s,s_i $                                                  & Frank angle                                  & rad             \\
		$y^{(i)}$                                                 & Position of disclination                     & m               \\
		$\mathsf E$                                               & Young modulus                                & Pa              \\
		$\nu \in (-1, \frac{1}{2})$                               & Poisson ratio                                & non-dimensional \\

		$h $                                                      & Plate thickness                              & m               \\
		$\mathsf D := \tfrac{\mathsf E}{1-\nu^2} \tfrac{h^3}{12}$ & Plate bending stiffness                      & Nm              \\
		$c_\nu := \tfrac{1}{12(1-\nu^2)}$                         & Dimensionless bending stiffness              & {non-dim.}      \\
		$p, p_o $                                                 & Transverse load per unit area, scale gauge   & Pa              \\

		$R $                                                      & Characteristic size of $\Omega$              & m               \\
		$\beta := \tfrac{R}{h} $                                  & Plate aspect ratio                           & non-dim.        \\
		$\gamma := \tfrac{p_o}{\mathsf E} $                       & Transverse load to stiffness ratio           & non-dim.        \\
		\bottomrule
	\end{tabular}
	\caption{Material and geometric parameters.}
	\label{table:2024}
\end{table}

\newcommand{\Fnum}{\mathscr{F}_{\eta}}

\section{Mathematical analysis of FvK equations}\label{2501041121}
In this section, we present the governing equations for the  F\"oppl-von K\'arm\'an
theory in the context of kinematically incompatible thin plates subjected to external transverse loads, and we provide existence and regularity theorems for their solutions. Throughout the paper we assume that $\Omega $ is a domain, that is,   an open, bounded  and simply connected set in $\mathbb{R}^2$.
Recall that the Dirac delta $\delta \in H^{-2}(\Omega)$ thanks to
the Sobolev embedding theorem.
A finite collection of disclinations are located at points $y^{(i)}\in\Omega$, each characterized by its own angle $0\neq s_i \in \mathbb{R} $. 
Define  $\theta \in H^{-2}(\Omega)$ as
\begin{equation}
	\label{202412111457}
	\theta := \sum^N_{i=1} s_i \delta(x-y^{(i)})
\end{equation}
where $s_i \in \mathbb{R}\setminus \{0\}$ for every $i \in \{1, \hdots, N\}$.
Let $v, w \in H^2_0(\Omega)$.	The system
\begin{equation} \label{2406171748}
	\begin{cases}
		\begin{alignedat}{4}
			& D \Delta^2 w       & \; = \;& [v, w] + p & \quad & \text{in } H^{-2}(\Omega), \\
			& \frac{1}{E h} \Delta^2 v & \; = \;& -\frac{1}{2}[w, w] + \theta & \quad & \text{in } H^{-2}(\Omega),
		\end{alignedat}
	\end{cases}
\end{equation}
is the F\"oppl-von K\'arm\'an  model for kinematically incompatible thin plates (see \cite{SN88}, \cite{gao14} \cite{singh21}). 
It is shown in \cite{CDLM24} (see Proposition 1.7), in particular that, when $v$ is regular enough,the condition $v \in H^2_0(\Omega)$ guarantees that the membrane displacement satisfies homogeneous Neumann boundary conditions on $\partial\Omega$, (see also \cite{Lewicka11}). Consequently, the condition $v, w \in H^2_0(\Omega)$ corresponds to a plate whose boundary is free of in-plane traction and the out-of-plane displacement is clamped. 
The following theorems guarantee the existence and regularity of solutions to the system \eqref{2406171748},   in the presence of Dirac deltas for the membrane equation.
\begin{theorem}{(Existence)}
	\label{2408121907}
	Let $\Omega \subset \mathbb{R}^2$ be a domain with Lipschitz boundary. Let $\theta \in H^{-2}(\Omega)$ be defined as in \eqref{202412111457} and $p \in H^{-2}(\Omega)$. Then, Eq. \ref{2406171748} admits a solution in $H^2_0(\Omega) \times H^2_0(\Omega)$.
\end{theorem}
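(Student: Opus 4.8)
The plan is to use the direct method of the calculus of variations on a reduced energy depending on $w$ alone. First I would note that $\Delta^2\colon H^2_0(\Omega)\to H^{-2}(\Omega)$ is an isomorphism: the bilinear form $(f,g)\mapsto(f,g)_{\Delta}=\int_\Omega\Delta f\,\Delta g\,dx$ is continuous, and since $(f,f)_{\Delta}=\|\nabla^2 f\|^2_{L^2(\Omega)}$ for $f\in H^2_0(\Omega)$ (integrate by parts the mixed second derivatives, first on $C_c^\infty$ then by density) while $\|\nabla^2 f\|_{L^2}$ dominates $\|f\|_{H^2}$ on $H^2_0(\Omega)$ by Poincaré, it is coercive, so Lax--Milgram applies. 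Hence for each $w\in H^2_0(\Omega)$ the second equation of \eqref{2406171748} has a unique solution $v=v(w)\in H^2_0(\Omega)$, once one checks that $-\tfrac12[w,w]+\theta\in H^{-2}(\Omega)$: the term $\theta\in H^{-2}(\Omega)$ by Sobolev embedding as recalled in the text, while $[w,w]=\cof(\nabla^2 w):\nabla^2 w\in L^1(\Omega)$ embeds into $H^{-2}(\Omega)$ because in two dimensions $H^2_0(\Omega)\hookrightarrow C^0(\overline\Omega)$, giving $|\langle[w,w],\varphi\rangle|\le\|[w,w]\|_{L^1}\|\varphi\|_{L^\infty}\le C\|w\|_{H^2}^2\|\varphi\|_{H^2}$.

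With $v(w)$ in hand, \eqref{2406171748} reduces to the single equation $D\Delta^2 w=[v(w),w]+p$ in $H^{-2}(\Omega)$, which I would recognize as $J'(w)=0$ for
\[
	J(w):=\frac D2\,(w,w)_{\Delta}+\frac{1}{2Eh}\,(v(w),v(w))_{\Delta}-\langle p,w\rangle .
\]
Verifying this uses two standard ingredients: the full symmetry of the trilinear form $(f,g,h)\mapsto\int_\Omega[f,g]\,h$ on $H^2_0(\Omega)^3$ (by density and integration by parts), and the identity $\Delta^2\bigl(\tfrac{d}{d\epsilon}v(w+\epsilon\varphi)\big|_{\epsilon=0}\bigr)=-Eh[w,\varphi]$ obtained by differentiating the second equation; together they give $J'(w)[\varphi]=\langle D\Delta^2 w-[v(w),w]-p,\varphi\rangle$, so that any critical point $w^{\ast}$ of $J$, paired with $v(w^{\ast})$, solves \eqref{2406171748} in $H^2_0(\Omega)\times H^2_0(\Omega)$.

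It remains to show $J$ attains its infimum. Coercivity is immediate from this form: since $(v(w),v(w))_{\Delta}\ge0$, one has $J(w)\ge\frac D2\|\Delta w\|^2_{L^2}-\|p\|_{H^{-2}}\|w\|_{H^2}\to+\infty$ as $\|w\|_{H^2}\to\infty$ (using $\|w\|_{H^2}\lesssim\|\Delta w\|_{L^2}$ on $H^2_0$). For weak lower semicontinuity, $(w,w)_{\Delta}$ is convex and continuous hence weakly l.s.c., $\langle p,\cdot\rangle$ is weakly continuous, and the nonlinear term is handled by a compensated-compactness argument: if $w_n\rightharpoonup w$ in $H^2_0(\Omega)$ then $\nabla w_n\to\nabla w$ strongly in $L^q(\Omega)$ for every $q<\infty$ (Rellich in dimension two), while $\cof(\nabla^2 w_n)\rightharpoonup\cof(\nabla^2 w)$ weakly in $L^2(\Omega)$ since the $2\times2$ cofactor is linear; using the divergence form $\langle[w_n,w_n],\varphi\rangle=-\int_\Omega\cof(\nabla^2 w_n)_{ij}\,\partial_i w_n\,\partial_j\varphi\,dx$ (the rows of the cofactor being divergence free), the product of a weakly and a strongly convergent factor passes to the limit against $\nabla\varphi\in L^4$, so $[w_n,w_n]\rightharpoonup[w,w]$ in $H^{-2}(\Omega)$ and hence $v(w_n)\rightharpoonup v(w)$ in $H^2_0(\Omega)$; weak l.s.c. of $\|\Delta\cdot\|_{L^2}$ then yields $(v(w),v(w))_{\Delta}\le\liminf_n(v(w_n),v(w_n))_{\Delta}$. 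A minimizing sequence is then bounded, a weakly convergent subsequence has limit $w^{\ast}$ with $J(w^{\ast})\le\liminf J(w_n)=\inf J$; being an unconstrained minimizer of the differentiable functional $J$, it satisfies $J'(w^{\ast})=0$, so $(w^{\ast},v(w^{\ast}))\in H^2_0(\Omega)\times H^2_0(\Omega)$ solves \eqref{2406171748}.

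I expect the main obstacle to be the weak continuity of $w\mapsto v(w)$: the Monge--Amp\`ere nonlinearity is a product of sequences converging only weakly in $L^2$, so passing to the limit is not automatic and genuinely relies on the null-Lagrangian / Jacobian structure above (linearity of the $2\times2$ cofactor, divergence-free rows, and the borderline two-dimensional embedding $H^2\hookrightarrow W^{1,q}$). A secondary point to get right is the sign bookkeeping that makes the reduced functional manifestly coercive: writing the membrane energy as $\tfrac{1}{2Eh}(v(w),v(w))_{\Delta}$ with the genuine Airy function $v(w)$, rather than expanding it through $[w,w]$ and a fixed potential $v_\theta$ of $\theta$ (with $v(w)=\psi(w)+v_\theta$, $\Delta^2 v_\theta=Eh\,\theta$), where the disclination cross term is only quadratically bounded and appears to compete with the bending term, is what keeps the coercivity transparent.
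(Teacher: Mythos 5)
Your proposal is correct, and its skeleton is the same as the paper's: eliminate the Airy function by solving the (linear, coercive) biharmonic problem for $v$ in terms of $w$, reduce to a single functional of $w$, and apply the direct method; indeed your $J(w)=\tfrac D2(w,w)_{\Delta}+\tfrac{1}{2Eh}(v(w),v(w))_{\Delta}-\langle p,w\rangle$ coincides, up to the additive constant $\tfrac{Eh}{2}(F_\theta,F_\theta)_{\Delta}$, with the paper's reduced functional $\mathcal J$ built from the operators $C$, $\Lambda_\theta$, $F_p$ of Proposition \ref{prop:202412131015}, and your Euler--Lagrange computation (including the sign: the reduced functional carries $+$ membrane energy even though $\mathscr F$ in \eqref{eq:202412181200} has $-\mathscr E_m$) matches the paper's. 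Where you genuinely diverge is in the two analytic steps of the direct method. For coercivity the paper follows Ciarlet: it expands the membrane energy into the quartic term $\tfrac{Eh}{8}(C(w),w)_\Delta$ and the indefinite quadratic disclination term $-\tfrac{Eh}{2}(\Lambda_\theta w,w)_\Delta$, and must then run a contradiction/rescaling argument with normalized sequences $\zeta_k=w_k/\lVert\Delta w_k\rVert_{L^2}$, using positivity and compactness of $C$, $\Lambda_\theta$; your ``keep the square'' observation, $(v(w),v(w))_\Delta\ge 0$, makes coercivity a one-line estimate and silently absorbs exactly the cross term that forces the paper's longer argument. For lower semicontinuity the paper cites the sequential compactness of $B$ (hence of $C$, $\Lambda_\theta$) from Ciarlet, while you re-derive the needed weak continuity of $w\mapsto v(w)$ by hand via the linearity of the $2\times2$ cofactor, the divergence-free rows, and the compact embedding $H^2\hookrightarrow W^{1,q}$, $q<\infty$ --- essentially reproving that compactness; note the divergence-form identity you invoke, $\langle[w_n,w_n],\varphi\rangle=-\int_\Omega \cof(\nabla^2w_n)\nabla w_n\cdot\nabla\varphi\,dx$, requires the same density justification as the paper's Lemma \ref{lemma:202412131500} (or you could use that lemma directly with $\cof(\nabla^2\varphi)$ and the strong $L^2$ convergence of $\nabla w_n\otimes\nabla w_n$, which is even simpler). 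In short: your route is more elementary and self-contained at the coercivity step, while the paper's operator formulation stays closer to Ciarlet's framework, which it reuses for the equivalence statement and the remaining propositions; both ultimately rest on the same symmetry lemma for the Monge--Amp\`ere trilinear form.
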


\begin{theorem}{(Regularity)}	\label{Thm:202420121439}
	Consider the assumptions of Theorem \ref{2408121907} and assume further that $p \in L^k(\Omega)$ for some real $k \ge 2$,
	$\partial \Omega \in C^{4,\gamma}$ for some $\gamma \in (0,1)$
	Denote with
	$(w, v) \in H^2_0(\Omega) \times H^2_0(\Omega)$ a solution to Eq. \ref{2406171748}.
	Then $w \in H^2_0(\Omega) \cap W^{4,k}(\Omega)$ and $v \in H_0^2(\Omega) \cap W^{2, \tau}(\Omega) \cap C^{4, \gamma}(\overline{\Omega} \setminus \cup_{i=1}^{N} \overline{B_{r}(y^{(i)})})$ for any $r > 0$ such that $\overline{B_{r}(y^{(i)})} \subset \Omega$ ($i \in \{1, \hdots, N\}$)
	 and any $\tau \in [1, \infty)$.
\end{theorem}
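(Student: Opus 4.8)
The plan is to run an elliptic bootstrap on the coupled system, relying on the $L^q$ and Schauder regularity theory of the clamped bilaplacian on the $C^{4,\gamma}$ domain $\Omega$ (Agmon--Douglis--Nirenberg; see also Gazzola--Grunau--Sweers) together with two structural facts. First, the Monge--Amp\`ere bracket has a double-divergence (null-Lagrangian) form, $[f,g]=\partial_{x_1 x_2}(f_{x_1}g_{x_2}+f_{x_2}g_{x_1})-\partial_{x_1 x_1}(f_{x_2}g_{x_2})-\partial_{x_2 x_2}(f_{x_1}g_{x_1})$; hence for $f,g\in H^2(\Omega)$, using the two-dimensional embedding $H^2(\Omega)\hookrightarrow W^{1,q}(\Omega)$ valid for all $q<\infty$, each product $f_{x_i}g_{x_j}$ lies in $L^q(\Omega)$ for all $q<\infty$ and so $[f,g]\in W^{-2,q}(\Omega)$ for all $q<\infty$. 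Second, in dimension two $\delta\in W^{-2,q}(\Omega)$ for every $q<\infty$ but not for $q=\infty$, since $W^{2,q'}(\Omega)\hookrightarrow C^{0}(\overline\Omega)$ precisely when $q'>1$; in particular $\theta\in W^{-2,q}(\Omega)$ for all $q<\infty$.

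For the first step I would use the membrane equation: by the facts above its right-hand side $Eh(-\tfrac12[w,w]+\theta)$ belongs to $W^{-2,q}(\Omega)$ for all $q<\infty$, so $L^q$ regularity of the clamped bilaplacian ($1<q<\infty$) and uniqueness of the solution in $H^2_0(\Omega)$ give $v\in W^{2,q}(\Omega)$ for all $q\in[1,\infty)$. This is precisely the asserted $v\in W^{2,\tau}(\Omega)$ for all $\tau\in[1,\infty)$, and it records $\nabla^2 v\in L^q(\Omega)$ for all $q<\infty$.

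Next I would upgrade $w$. Writing $[v,w]=\cof(\nabla^2 v):\nabla^2 w$, H\"older's inequality with $\nabla^2 v\in L^q$ ($q$ arbitrarily large) and $\nabla^2 w\in L^2$ gives $[v,w]\in L^s(\Omega)$ for all $s<2$; since $p\in L^k(\Omega)$ with $k\ge2$, the right-hand side of the deflection equation is in $L^s(\Omega)$ for all $s<2$, whence $w\in W^{4,s}(\Omega)$ for all $s<2$. The two-dimensional embedding $W^{4,3/2}(\Omega)\hookrightarrow C^{2,\alpha}(\overline\Omega)$ then yields $w\in C^{2,\alpha}(\overline\Omega)$, so $\nabla^2 w\in L^\infty(\Omega)$. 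Feeding this back, $[v,w]=\cof(\nabla^2 v):\nabla^2 w\in L^q(\Omega)$ for all $q<\infty$, so the right-hand side of the deflection equation lies in $L^k(\Omega)$ and $L^k$ regularity gives $w\in W^{4,k}(\Omega)$; since $k\ge 2$, also $w\in C^{2,\gamma}(\overline\Omega)$ for the given $\gamma\in(0,1)$, hence $[w,w]\in C^{0,\gamma}(\overline\Omega)$.

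Finally, for the Schauder regularity of $v$ away from the defects, fix $r>0$ with $\overline{B_r(y^{(i)})}\subset\Omega$ for all $i$ and set $U:=\Omega\setminus\bigcup_{i=1}^N\overline{B_{r/2}(y^{(i)})}$, an open set with $\overline\Omega\setminus\bigcup_i\overline{B_r(y^{(i)})}\subset\overline U$. On $U$ one has $\theta\equiv0$, so $\Delta^2 v=-\tfrac{Eh}{2}[w,w]\in C^{0,\gamma}(\overline U)$; interior Schauder estimates give $v\in C^{4,\gamma}_{\mathrm{loc}}(U)$, and along $\partial\Omega\cap\overline U$ the homogeneous data $v=\partial_n v=0$ together with $\partial\Omega\in C^{4,\gamma}$ and boundary Schauder estimates yield $C^{4,\gamma}$ regularity up to $\partial\Omega$, so $v\in C^{4,\gamma}$ on $\overline\Omega\setminus\bigcup_i\overline{B_r(y^{(i)})}$. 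I expect the main obstacle to be hidden in the first two steps: the bracket terms are only $L^1$ at face value, and it is their null-Lagrangian structure combined with $H^2\hookrightarrow W^{1,q}$ (all $q<\infty$) that lifts them to $W^{-2,q}$ for every finite $q$; symmetrically, the defect term sits in $W^{-2,q}$ only for $q<\infty$, which is exactly why $v$ cannot be placed in $W^{2,\infty}$ (near each $y^{(i)}$ it inherits the $|x-y^{(i)}|^2\log|x-y^{(i)}|$ behaviour of the bilaplacian fundamental solution) and why the $C^{4,\gamma}$ statement for $v$ must be localized. The order of the bootstrap is likewise essential: one must first raise $\nabla^2 w$ to $L^\infty$, after which the merely $L^q$-integrable Hessian of $v$ no longer obstructs the final $W^{4,k}$ bound for $w$.
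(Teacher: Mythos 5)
Your argument is correct in substance, but it follows a genuinely different route from the paper's. You start from the membrane equation, exploit the double-divergence (null-Lagrangian) form of $[w,w]$ together with $H^2(\Omega)\hookrightarrow W^{1,q}(\Omega)$ and $\delta\in W^{-2,q}(\Omega)$ for all finite $q$, and invoke the isomorphism $\Delta^2\colon W^{2,q}_0(\Omega)\to W^{-2,q}(\Omega)$ to get $v\in W^{2,\tau}(\Omega)$ in one stroke, before bootstrapping $w$; the final $C^{4,\gamma}$ statement you obtain from local interior and boundary Schauder estimates on $\Omega$ minus small balls around the $y^{(i)}$. The paper instead bootstraps $w$ first, via $L^1(\Omega)\hookrightarrow H^{-1-\epsilon}(\Omega)$, a fractional-regularity result (Savar\'e) giving $w\in H^{3-\overline\epsilon}(\Omega)$, fractional Sobolev embeddings and H\"older, iterating up to $w\in H^4(\Omega)$; only then does it treat $v$, by subtracting the explicit particular solution $v_{\mathrm p}=\sum_i s_i\mathcal G(\cdot\,;y^{(i)})$ built from the biharmonic fundamental solution, so that $v_{\mathrm e}=v-v_{\mathrm p}$ solves a global clamped problem with H\"older right-hand side and smooth boundary data, to which the global $L^p$ and Schauder theorems of Gazzola--Grunau--Sweers apply directly (this also delivers $v\in W^{2,\tau}$ through the explicit $W^{2,\tau}$ regularity of $v_{\mathrm p}$), and finally it revisits $w$ to reach $W^{4,k}$, exactly as you do. Your route buys a cleaner start (no fractional spaces, no Savar\'e citation, and $v\in W^{2,\tau}$ immediately), but it leans on two less textbook-packaged ingredients that you should cite or prove carefully: the negative-order $L^q$ theory for the clamped bilaplacian with data in $W^{-2,q}(\Omega)$ (obtainable from Agmon--Douglis--Nirenberg estimates by duality, plus identification with the $H^2_0$ solution by uniqueness), and local-up-to-the-boundary Schauder regularity for the clamped problem on the truncated domain; the paper's decomposition avoids both at the cost of the explicit Green-type particular solution and the fractional step. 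Your intermediate conclusions ($w\in W^{4,s}$ for $s<2$, hence $C^{2,\alpha}(\overline\Omega)$, then $w\in W^{4,k}$ and $[w,w]\in C^{0,\gamma}(\overline\Omega)$) match the paper's, and the order of your bootstrap is sound.
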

The proof of existence and regularity are an adaptation of the technique presented by Ciarlet for the case $s_i = 0$.
As we have not been able to locate a proof available in the literature for this specific setting,  we report the details in an Appendix for the reader's convenience.
Notice that, as customary for FvK equations in the compatible case, one cannot in general expect uniqueness of the solution.
We highlight that the regularity theorem has significant practical utility for our simulations, as it ensures that, thanks to the Sobolev embedding, \( w \in C^{2,\zeta}(\overline{\Omega}) \) with \( \zeta \in (0,1) \). Hence, even in the presence of rough/wrinkled solutions, the deflection is at least twice differentiable.

\section{Variational formulation}
\label{sec:202501011737}
We show that Equation \eqref{2406171748} can be derived via the variational principle for a suitable functional.

\begin{theorem}\label{2412291203}
	Under the assumptions of Theorem \ref{2408121907},
	the system of equations \eqref{2406171748}	corresponds 	to the 	first variation of
	the functional
	$		\mathscr{F} : H^2_0(\Omega) \times H^2_0(\Omega) \to \mathbb{R}  $ defined as
	\begin{gather}
		\label{eq:202412181200}
		\mathscr{F}(v, w) := -\mathscr{E}_{m}(v) + \mathscr{E}_{c}(v, w) + \mathscr{E}_{b}(w) + \langle v, \theta \rangle - \langle w, p \rangle,\textrm{ where}\\
		\mathscr{E}_{m}(v) := \frac{1}{2Eh} \int_{\Omega} |\nabla^2 v|^2 \, dx, \quad
		\mathscr{E}_{c}(v, w) :=  	\frac{1}{2}	\int_{\Omega} \cof(\nabla^2 v) : \nabla w \otimes \nabla w \, dx,
		\quad \mathscr{E}_{b}(w) := \frac{D}{2} \int_{\Omega} |\nabla^2 w|^2 \, dx.\nonumber
	\end{gather}
\end{theorem}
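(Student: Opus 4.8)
The plan is to verify that $\mathscr{F}$ is well defined and Fr\'echet differentiable on the Hilbert space $H^2_0(\Omega)\times H^2_0(\Omega)$, to compute its two partial first variations $\partial_v\mathscr{F}$ and $\partial_w\mathscr{F}$, and to recognise the resulting stationarity conditions as the weak forms of the two equations of \eqref{2406171748}. First I would check well-posedness: the quadratic forms $\mathscr{E}_m$ and $\mathscr{E}_b$ are finite and continuous on $H^2_0(\Omega)$ because $\nabla^2 v,\nabla^2 w\in L^2(\Omega)$, and $\langle v,\theta\rangle$, $\langle w,p\rangle$ are finite since $\theta,p\in H^{-2}(\Omega)$. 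Writing $\mathscr{E}_c(v,w)=\tfrac12\,T(v,w,w)$ with the trilinear form $T(f,g,h):=\int_\Omega\cof(\nabla^2 f):\nabla g\otimes\nabla h\,dx$, the two-dimensional Sobolev embedding $H^1(\Omega)\hookrightarrow L^4(\Omega)$ gives $|T(f,g,h)|\le\|\nabla^2 f\|_{L^2}\|\nabla g\|_{L^4}\|\nabla h\|_{L^4}\le C\,\|f\|_{H^2}\|g\|_{H^2}\|h\|_{H^2}$, so $T$ is bounded and $\mathscr{F}$ is a polynomial of degree at most three on $H^2_0(\Omega)\times H^2_0(\Omega)$, hence of class $C^\infty$; its first variation is therefore the sum of the first variations of the individual summands.

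Next I would compute those variations, the only term needing care being $\mathscr{E}_c$: here I would use that in dimension two the cofactor map is \emph{linear}, $\cof A=(\operatorname{tr}A)\,I-A$ for $A\in\mathbb{R}^{2\times2}$, so that $t\mapsto\mathscr{E}_c(v+t\varphi,w)$ is affine and $t\mapsto\mathscr{E}_c(v,w+t\psi)$ is quadratic. Differentiating all terms then yields, for all $\varphi,\psi\in H^2_0(\Omega)$,
\begin{align*}
\partial_v\mathscr{F}(v,w)[\varphi]&=-\frac{1}{Eh}\int_\Omega\nabla^2 v:\nabla^2\varphi\,dx+\frac12\,T(\varphi,w,w)+\langle\varphi,\theta\rangle,\\
\partial_w\mathscr{F}(v,w)[\psi]&=T(v,w,\psi)+D\int_\Omega\nabla^2 w:\nabla^2\psi\,dx-\langle\psi,p\rangle,
\end{align*}
the coefficient of $T(v,w,\psi)$ in the second line arising from the symmetry of $T$ in its last two arguments.

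To reach \eqref{2406171748} I would then invoke two integration-by-parts identities, each established for $f,g,h\in C^\infty_c(\Omega)$ and extended to $H^2_0(\Omega)$ by density (using that $\partial\Omega$ is Lipschitz and that $f$ and $\nabla f$ have zero trace for $f\in H^2_0(\Omega)$). The first, $\int_\Omega\nabla^2 f:\nabla^2 g\,dx=\int_\Omega\Delta f\,\Delta g\,dx$, rewrites the two stiffness terms as the weak bilaplacian. For the second, one uses that the cofactor of a Hessian is row-wise divergence free ($\operatorname{div}\cof(\nabla^2 f)=0$, a null-Lagrangian identity), whence $[f,g]=\operatorname{div}\!\big(\cof(\nabla^2 f)\nabla g\big)$ and, for $h\in H^1_0(\Omega)$, $\int_\Omega[f,g]\,h\,dx=-T(f,g,h)$; moreover $T$ is invariant under all permutations of $f,g,h\in H^2_0(\Omega)$ (symmetry in the last two slots is immediate, and invariance under a cyclic shift follows from two further integrations by parts after writing $\cof(\nabla^2 f)=(\Delta f)I-\nabla^2 f$). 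Then $\tfrac12\,T(\varphi,w,w)=\tfrac12\,T(w,w,\varphi)=-\tfrac12\int_\Omega[w,w]\,\varphi\,dx$ and $T(v,w,\psi)=-\int_\Omega[v,w]\,\psi\,dx$, so the conditions $\partial_v\mathscr{F}(v,w)=0$ and $\partial_w\mathscr{F}(v,w)=0$ become, for all test functions,
\begin{align*}
\frac{1}{Eh}\int_\Omega\Delta v\,\Delta\varphi\,dx&=-\frac12\int_\Omega[w,w]\,\varphi\,dx+\langle\varphi,\theta\rangle,\\
D\int_\Omega\Delta w\,\Delta\psi\,dx&=\int_\Omega[v,w]\,\psi\,dx+\langle\psi,p\rangle,
\end{align*}
which are precisely the weak forms of \eqref{2406171748} in $H^{-2}(\Omega)$ (recall $[v,w],[w,w]\in L^1(\Omega)\hookrightarrow H^{-2}(\Omega)$ in two dimensions, so the integrals coincide with the corresponding duality pairings).

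The main obstacle I anticipate is precisely this last step: one must show that every boundary integral produced by the integrations by parts vanishes, that the identity $[f,g]=\operatorname{div}(\cof(\nabla^2 f)\nabla g)$ persists for $f,g\in H^2_0(\Omega)$ (there $\cof(\nabla^2 f)\nabla g\in L^{4/3}(\Omega)$ has distributional divergence $[f,g]\in L^1(\Omega)$), and that $T$ is fully permutation-symmetric on $H^2_0(\Omega)$ — all of which reduce to the smooth case through the density of $C^\infty_c(\Omega)$ in $H^2_0(\Omega)$, the only genuine care being in the bookkeeping of the trace terms. The remaining algebra is routine.
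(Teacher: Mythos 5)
Your proposal is correct and rests on the same ingredients as the paper's argument: the null-Lagrangian identity $\operatorname{div}\cof(\nabla^2 f)=0$ and the resulting cofactor/Monge--Amp\`ere integration-by-parts identity with permutation symmetry of $\int_\Omega [f,g]\,h\,dx$ (the paper's Lemma \ref{lemma:202412131500} and \cite[Theorem 5.8-2]{ciarlet97}), proved by density in $H^2_0(\Omega)$. The only difference is cosmetic ordering — you differentiate $\mathscr{E}_c$ directly (using the linearity of the $2\times 2$ cofactor) and then integrate by parts, whereas the paper first rewrites $\mathscr{F}$ via Remark \ref{remark:202412111625} and then differentiates the cubic term — plus you make explicit the Fr\'echet differentiability check that the paper treats as straightforward.
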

Before proving the theorem, we show in a  remark   an alternative representation of the functional $\mathscr{F} $ for the clamped case.
\begin{remark}\label{remark:202412111625}
	Functional  $\mathscr{F}$ defined in Eq. \ref{eq:202412181200}
	is equal to
	\begin{equation}\label{2412291252}
		-\frac{1}{2Eh} \int_{\Omega} |\nabla^2 v|^2 \, dx   -\frac{1}{2} \int_{\Omega} [w,w] \, v \, dx + \frac{D}{2} \int_{\Omega} |\nabla^2 w|^2 \, dx+ \langle v, \theta \rangle - \langle w, p \rangle
	\end{equation}
	in $H^2_0(\Omega) \times H^2_0(\Omega)$.
	The claim follows by Lemma \ref{lemma:202412131500} given in the Appendix by taking $\phi = v$, $\eta = \chi = w$
	in Eq. 	\ref{eq:202412131450}.
%
	%
%
Eq. \ref{2412291252} provides an alternative expression for the functional \eqref{eq:202412181200}. 
We use Eq. \eqref{eq:202412181200} to derive the set of discretized equations. 
We use  \eqref{2412291252}  to establish the strong FvK field equations.
	However, when other boundary conditions are considered the two expressions may not coincide.
\end{remark}

\paragraph{Proof of Theorem \ref{2412291203}.}
Using the representation formula in Eq. \eqref{2412291252}, we compute
	\begin{equation}
		\frac{d}{dt} \left( -\frac{1}{2} \int_{\Omega} [w,w]  (v + t \varphi) \, dx \right) \Big|_{t=0} = -\frac{1}{2} \int_{\Omega} [w,w]  \varphi \, dx \quad \forall \varphi \in H_0^2(\Omega).
	\end{equation}
	and
	\begin{equation}
		\frac{d}{dt} \left( -\frac{1}{2} \int_{\Omega} [w + t \varphi, w + t \varphi]  v \, dx \right) \Big|_{t=0} = -\int_{\Omega} [\varphi, w]  v \, dx = -\int_{\Omega} [v,w]  \varphi \, dx \quad \forall \varphi \in H_0^2(\Omega)
	\end{equation}
%
where in the least equality we have used the symmetry (see \cite[Theorem 5.8-2]{ciarlet97})
\begin{equation}
	\int_{\Omega} [\varphi, w] v \, dx = \int_{\Omega} [w, v]  \varphi \, dx.
\end{equation}
The computation of the first variations of $\mathscr{E}_{m}$, $\mathscr{E}_{b}$, $\langle v, \theta \rangle$ and $\langle w, p \rangle$ is straightforward.
$\square$

\subsection{Non-dimensional formulation}
\label{subsection:2416101524}
We highlight the role of key parameters by rescaling dependent and independent variables as follows
\begin{equation}
	\xi := \frac{x}{R} \qquad \tilde p(\xi) := \frac{p(x/R)}{p_o} \qquad \tilde w(\xi) := \frac{w(x/R)}{w_o} \qquad \tilde v(\xi) := \frac{v(x/R)}{v_o}
\end{equation}
where $p_o$, $w_o$ and $v_o$ are   scaling factors to be tuned. We denote by $\tilde \Omega$ the domain $\Omega$ rescaled by its characteristic size $R$, and by $[\cdot, \cdot]_{\xi} := R^4 [\cdot, \cdot]$ and $\nabla^2_{\xi} := R^2 \nabla^2$, the rescaled Monge-Amp\`ere bracket and Hessian operator, respectively. Upon rescaling, functional \eqref{eq:202412181200} reads
\begin{multline}
	\label{2412291317}
	\mathscr{F}(v, w) =
	-\frac{v_o^{2}}{E h R^{2}} \frac{1}{2} \int_{\tilde \Omega} |\nabla_{\xi}^2 \tilde  v|^2  d\xi + \frac{E h^{3} w_o^{2}}{R^{2}} \frac{c_\nu}{2} \int_{\tilde \Omega} |\nabla_{\xi}^2 \tilde w|^2  d\xi - \frac{v_o w_o^{2}}{2 R^{2}} \int_{\tilde \Omega} [\tilde w, \tilde w]_{\xi}  \tilde v  d\xi + \\
	- R^{2} p_o w_o \int_\Omega \tilde p  \tilde w  d\xi
	+ v_o \langle \tilde \theta, \tilde v \rangle
\end{multline}
where $\tilde \theta := \sum\limits_{i=1}^N s_i \delta(\xi - \tilde{y}^{(i)})$, $\tilde{y}^{(i)} := y^{(i)}/R$,
and we have made use of the explicit expressions for
$D$ and $c_\nu$
from Table \ref{table:2024}.
By rescaling Eq. \ref{2412291317} by $\frac{v^2_o}{E R^2 h}$, we identify the rescaled functional
$ \frac{E R^2 h}{v^2_o}	\mathscr{F}=:\mathscr{\tilde F}: H^2_0(\tilde \Omega) \times H^2_0(\tilde \Omega)\mapsto\mathbb{R}$
\begin{multline}
	\label{eq:202412111724}
	\mathscr{\tilde F}(\tilde v, \tilde w) :=
	- \frac{1}{2}\int_{\tilde \Omega} |\nabla_{\xi}^2 \tilde v|^2  d\xi + \frac{E^2 h^4 w_o^2}{v_o^2} \frac{c_\nu}{2} \int_{\tilde \Omega} |\nabla_{\xi}^2 \tilde w|^2  d\xi - \frac{E h w_o^2}{v_o} \frac{1}{2} \int_{\tilde \Omega} [\tilde w, \tilde w]_{\xi}  \tilde v  d\xi + \\
	- \frac{p_o w_o E R^4 h}{v^2_o} \int_{\tilde \Omega} \tilde p  \tilde w  d\xi + \frac{E R^2 h}{v_o} \langle \tilde \theta, \tilde v \rangle.
\end{multline}
We choose $w_o$ and $v_o$ so that the coefficients of the bending and coupling energy terms in \eqref{eq:202412111724} are simultaneously equal to $1$. By solving the system
\begin{equation}
	\label{eq:202412111723}
	\frac{E^2 h^4 w_o^2}{v_o^2} = 1, \qquad \frac{E h w_o^2}{v_o} = 1,
\end{equation}
we obtain $w_o = h$ and $v_o = E h^3$, which in turn implies that $\mathscr{E}_m$, $\mathscr{E}_b$ and $\mathscr{E}_c$ scale as $\frac{Eh^5}{R^2}$. To simplify the notation, we drop the tildas and the subscript $\xi$ 
so that the non-dimensional energy finally reads
\begin{equation}
	\label{eq:202412161314}
	\mathscr{F}(v, w) :=
	- \frac{1}{2} \int_{\Omega} |\nabla^2 v|^2 \, d\xi + \frac{c_\nu}{2} \int_{\Omega} |\nabla^2 w|^2 \, d\xi
	- \frac{1}{2} \int_{\Omega} [w, w]  v \, d\xi
	- \gamma \beta^4 \int_{\Omega} p  w \, d\xi + \beta^2 \langle \theta, v \rangle.
\end{equation}
Non-dimensional parameters
\begin{equation} \label{eq:202501011638}
	\beta := \frac{R}{h}, \qquad \gamma := \frac{p_o}{E}
\end{equation}
are the plate aspect ratio (slenderness),  and the scaled pressure, which measures the transverse load relative to the (three-dimensional) material stiffness.
The first variation of \eqref{eq:202412161314} yields
\begin{equation} \label{2408121717}
	\begin{cases}
		\begin{alignedat}{4}
			& c_\nu \Delta^2 w & \; = \;& [v, w] + \gamma\beta^4  p & \quad & \text{in } H^{-2}(\O), \\
			& \hspace{0.6em}\Delta^2 v & \; = \;& -\frac{1}{2}[w, w] + \beta^2  \theta & \quad & \text{in } H^{-2}(\O),
		\end{alignedat}
	\end{cases}
\end{equation}
that is, the non-dimensional form of \eqref{2406171748}.
In \eqref{2408121717}-(2) remark that $\beta^2$ acts as a geometric gain factor for the distribution of disclinations, while in \eqref{2408121717}-(1) $\beta^4$ scales the transverse external load.

To estimate physically relevant values  for the non-dimensional parameters, consider a thin plate of characteristic length $R=1$, thickness $h = 0.01$m, Young's modulus $E = 10$GPa, and Poisson ratio $\nu = 0.15$, subject to a transverse pressure due to a distributed load comparable to its own weight. Assuming a density $\rho \sim 10^3$kg/m$^3$ (representative of materials like steel as well as graphene) and denoting by $g$ the gravitational acceleration, the characteristic pressure is $p \sim \rho g h \sim 100$Pa.
We then obtain $	\overline{\beta} = 100, \overline{\gamma} = 10^{-8}$ as the reference values  for the non-dimensional parameters.
The range of values for the aspect ratio $\beta$ and the relative transverse pressure $\gamma$ that we will consider in the following numerical experiments span the physical ranges of moderately thick ($\beta \sim 10$) to very thin ($\beta \sim 100$) plates, and transverse loads starting from $10$-fold the plate's own weight.

\subsection{Discrete Numerical Formulation}\label{sec:IPCDformulation}
For our fourth-order problem with $H^2$-scalar unknowns, we use $C^0$-continuous elements and apply an interior penalty method to weakly enforce gradient continuity across mesh interfaces (see \cite{Brenner}). This results in a non-conforming Discontinuous Galerkin (DG) method ~\cite{Arnold82, Arnold01}.
A comprehensive comparison of DG methods for fourth-order problems can be found in \cite{Arnold00}, while implementations for biharmonic problems are discussed in \cite{Georgoulis08, Dong23}. For the specific case of F\"oppl-von K\'arm\'an equations,  $C^0$-DG methods and corresponding error estimates are described in \cite{Carstensen}.

Our implementation builds upon the formulation in $\cite{brunetti}$. Instead of adding the penalty terms to the discrete weak form, we generalize the approach of $\cite{brunetti}$ by incorporating the penalties directly into the energy functional as additional energy terms.
This leads to a more compact form for the overall functional and provides greater flexibility in the implementation.

Let  $\mathcal{T}$  denote a regular triangulation of  $\Omega \subset \mathbb{R}^2$ into closed triangles. Define \( \edgbd \) as the set of boundary edges, \( \edgin \) as the set of internal edges, so that \( \mathcal{E} := \edgin \cup \edgbd \) is the collection of all edges. For  $T \in \mathcal{T}$, let  $\mathrm{P}_3(T)$  be the space of polynomials of degree at most three defined on  $T$. The Lagrange Finite Element space  $\mathcal{V}(\mathcal{T})$  consists of globally continuous, piecewise cubic polynomials that vanish on the boundary edges \( \edgbd \), namely
\begin{equation}
	\mathcal{V}(\mathcal{T}) := \Big\{ u \in C^{0}(\Omega) : u|_{T} \in \text{P}_3(T) \quad \forall T \in \mathcal{T} \Big\} \cap H^1_0(\Omega)
\end{equation}
%
We following \cite{Brenner:Von_Karman}
and add additional terms to our
 DG formulation   accounting for discontinuities in the gradient across mesh interfaces and ensure the coercivity of the problem in $\mathcal{V}(\mathcal T)$.
For every edge $e \in \mathcal{E}$   and every scalar field $u : \overline{\Omega} \to \mathbb{R}$, the jump and average operators are defined as follows
\begin{equation}\label{2412291421}
	\llbracket u \rrbracket_{e} :=
	\begin{cases}
		u_{+}|_{e} - u_{-}|_{e} & \text{ if } e \in \mathcal{E}_{\text{in}}  \\
		u|_{e}                  & \text{ if } e \in \mathcal{E}_{\text{bnd}}
	\end{cases}
	\qquad
	\{ u \}_{e} :=
	\begin{cases}
		\displaystyle{\frac{u_{+}|_{e} + u_{-}|_{e}}{2} } & \text{ if } e \in \mathcal{E}_{\text{in}}  \\
		u|_{e}                                            & \text{ if } e \in \mathcal{E}_{\text{bnd}}
	\end{cases}
\end{equation}
where $u_{+} = u |_{T^{+}}$, $u_{-} = u |_{T^{-}}$, $T^{+}$,
$T^{-}$ are two closed triangles whose intersection is   $e$.
(Definitions \ref{2412291421} should be applied componentwise when $u$ is a vector or a matrix). 
Finally, for any $T \in \mathcal{\mathcal{T}}$ let $\eta := \text{diam}(T)$.
We   take $n_e$ to be the outward unit normal on $e \in \mathcal{E}_{\text{in}}$, pointing from $T^{-}$ to $T^{+}$. Note that $\llbracket \partial_{n_e} u \rrbracket$ and $\{ \partial_{n_e n_e} u \}$ are independent on the particular labelling of two adjacent triangles $T^{+}$ and $T^{-}$. Based on the definition \eqref{eq:202412161314} and in view of   Remark \ref{remark:202412111625} we define $\Fnum : \mathcal{V}(\mathcal{T}) \times \mathcal{V}(\mathcal{T}) \to \mathbb{R} $ as
\begin{multline}
	\label{eq: Functional Von-Karman Plate discretized}
	\Fnum(\uv, \uw) := \hspace{-0.4em} \sum\limits_{T \in \mathcal{T}} \Big( -\frac{1}{2} \lVert \nabla^2 \uv \rVert^2_{L^2(T)} + \frac{1}{2} \int_{T} \cof(\nabla^2 \uv) : (\nabla \uw \otimes \nabla \uw) \, d\xi + \frac{c_\nu}{2} \lVert \nabla^2 \uw \rVert^2_{L^2(T)} + \\
	-\gamma\beta^4 \int_T \hspace{-0.4em} \uw  p \, d\xi  \Big)  + \beta^2 \langle \uv , \theta \rangle
\end{multline}
and
\begin{equation}
	\label{eq: DG formulation - LCDG1}
	\mathscr{L}_{1}(u) := -\frac{1}{2} \sum_{e \in \mathcal{E}} \int_{e} \llbracket \nabla u \cdot n_e \rrbracket_{e}  \{ \nabla^2 u \hspace{0.2em} n_e \cdot n_e \}_{e} \, d\mathcal{H}^1 \qquad \mathscr{L}_{2}(u) := \frac{1}{2} \sum_{e \in \mathcal{E}} \frac{\alpha}{\{ \eta \}_e} \int_{e} \llbracket \nabla u \cdot n_e \rrbracket_{e}^2 \, d\mathcal{H}^1
\end{equation}
where $\mathscr{L}_1, \mathscr{L}_2 : \mathcal{V}(\mathcal{T}) \to \mathbb{R} $.

The  first variation of $\mathscr{L}_1$ gives rise to the classical terms in the symmetric DG formulation of the biharmonic problem. The key novelty in our approach is that we prescribe the DG formulation at the energy level, rather than in a weak form.
The term	$\mathscr{L}_2$ is a standard finite penalization term that penalizes the jump in the normal direction of the gradient, ensuring that the  solution is globally $C^1$.
This is achieved because the tangential component of the gradient is continuous, as  we use $C^0$ elements. The parameter $\alpha$ is not part of the mechanical model but is a tunable parameter, which we set to 300 in our simulations.

We are now in a position to introduce our symmetric Interior Penalty DG formulation of problem \eqref{2408121717} based on the variational principle. Define $\mathscr{I} : \mathcal{V}(\mathcal{T}) \times \mathcal{V}(\mathcal{T}) \to \mathbb{R} $
where
\begin{align}
	\mathscr{I}_\eta(\uv, \uw) := \Fnum(\uv, \uw) + c_\nu \mathscr{L}_{1}(\uw) + \mathscr{L}_{2}(\uw) -  \mathscr{L}_{1}(\uv) - \mathscr{L}_{2}(\uv) + \\\notag 
	- \frac{1}{2} \sum_{e \in \mathcal{E}_{\text{in}}} \int_{e} \{ \cof(\nabla \uw \otimes \nabla \uw) : n_e \otimes n_e \}_{e} \llbracket \nabla \uv \cdot n_e \rrbracket_{e} \, d\mathcal{H}^1.
\end{align}
We seek $(v, w)$ $\in \mathcal{V}(\mathcal{T}) \times \mathcal{V}(\mathcal{T})$ such that the following equations are simultaneously verified
	\begin{align}
		\label{eq:202412181350}
		\dfrac{d \mathscr{I}_\eta(v + t \varphi , w)}{dt} \Big|_{t = 0} = 0, \qquad
		\frac{d\mathscr{I}_\eta(v, w + t \varphi )}{dt} \Big|_{t = 0} = 0 \qquad \forall \varphi \in \mathcal{V}(\mathcal{T}).
	\end{align}
The discrete weak DG formulation, which we refer to as VAR (Variational) in what follows, reads as follows:
%
find $(v, w)$ $\in \mathcal{V}(\mathcal{T}) \times \mathcal{V}(\mathcal{T})$ such that
\newcommand{\myup}{\blacksquare}
\newcommand{\mydown}{\blacksquare_2}
\newcommand{\nyup}{\blacktriangledown}
\newcommand{\nydown}{\triangledown}
\begin{multline}
	\label{eq:202412271313}
	- \sum\limits_{T \in \mathcal{T}} \left( \int_{T} \nabla^2 v : \nabla^2 \varphi \, d\xi + \frac{1}{2} \int_{T} \underbrace{\left( \nabla w \otimes \nabla w \right) :  \cof(\nabla^2 \varphi) }_{\myup} \, d\xi \right) + \sum_{e \in \mathcal{E}} \Big( \int_{e} \llbracket \nabla v \cdot n_e \rrbracket_{e}  \{ \nabla^2 \varphi  n_e \cdot n_e \}_{e} \, d\mathcal{H}^1 + \\
	+ \int_{e} \llbracket \nabla \varphi \cdot n_e \rrbracket_{e}  \{ \nabla^2 v  n_e \cdot n_e \}_{e} \, d\mathcal{H}^1 - \frac{\alpha}{\{ \eta \}_e} \int_{e} \llbracket \nabla v \cdot n_e \rrbracket_{e} \llbracket \nabla \varphi \cdot n_e \rrbracket_{e} \, d\mathcal{H}^1 + \\
	- \frac{1}{2} \sum_{e \in \mathcal{E}_{\text{in}}} \int_{e} \underbrace{ \{ \cof(\nabla w \otimes \nabla w) : n_e \otimes n_e \}_{e} \llbracket \nabla \varphi \cdot n_e \rrbracket_{e}}_{\nyup} \, d\mathcal{H}^1 \Big) = - \beta^2 \sum\limits_{T \in \mathcal{T}} \int_T \varphi  \theta \, d\xi, \quad \forall \varphi \in \mathcal{V}(\mathcal{T});
\end{multline}
\begin{multline}
	\label{eq:202412271314}
	\sum\limits_{T \in \mathcal{T}} \left( c_{\nu} \int_{T} \nabla^2 w : \nabla^2 \varphi \, d\xi + \int_{T} \underbrace{ \left( \nabla w \otimes \nabla \varphi \right) :  \cof(\nabla^2 v) }_{\Box}  d\xi \right) + \sum_{e \in \mathcal{E}} \Big( - c_{\nu} \int_{e} \llbracket \nabla w \cdot n_e \rrbracket_{e}  \{ \nabla^2 \varphi  n_e \cdot n_e \}_{e} \, d\mathcal{H}^1 + \\
	- c_{\nu} \int_{e} \llbracket \nabla \varphi \cdot n_e \rrbracket_{e}  \{ \nabla^2 w \hspace{0.2em} n_e \cdot n_e \}_{e} \, d\mathcal{H}^1 + \frac{\alpha}{\{ \eta \}_e} \int_{e} \llbracket \nabla w \cdot n_e \rrbracket_{e} \llbracket \nabla \varphi \cdot n_e \rrbracket_{e} \, d\mathcal{H}^1 + \\  -  \sum_{e \in \mathcal{E}_{\text{in}}} \int_{e} \underbrace{ \{ \cof(\nabla w \otimes \nabla \varphi) : n_e \otimes n_e \}_{e}  \llbracket \nabla v \cdot n_e \rrbracket_{e} }_{\nydown} \, d\mathcal{H}^1  \Big)
	= f \sum\limits_{T \in \mathcal{T}} \int_{T} \varphi  p \, d\xi,  \quad \forall \varphi \in \mathcal{V}(\mathcal{T}).
\end{multline}
To assess the performance of our approach, we compare it with two alternative formulations available in the literature. The first formulation (\cite{Brenner:Von_Karman}) can be derived from   \eqref{eq:202412271313} and \eqref{eq:202412271314}, where the terms marked by
$\myup$ and $\Box$, are replaced with
$-[w, w] \varphi $ and $ -[v, w] \varphi$, respectively, and the terms marked by  $\nyup$, $\nydown$ are replaced by 
$$-\llbracket \{ \cof(\nabla^2 w) \}_e \nabla w \cdot n_e \rrbracket_e  \varphi \quad \text{ and } \quad  -\frac{1}{2} \left( \llbracket \{ \cof(\nabla^2 v) \}_e \nabla w \cdot n_e \rrbracket_e  +  \llbracket \{ \cof(\nabla^2 w) \}_e \nabla v \cdot n_e \rrbracket_e  \right)  \varphi, 
$$ 
respectively. The second formulation (\cite{Carstensen}) is obtained by replacing $\myup$ and $\Box$ as above and by omitting the terms $\nyup$ and $\nydown$.




\section{Numerical results}\label{2501041129}

We solve the  variational  problem \eqref{eq:202412181350} numerically using the  DolfinX/FEniCSx framework (\cite{BarattaEtal2023}) for Finite Element discretization, and PETSc (\cite{petsc-tao-users-manual}) for efficient and scalable linear algebra operations.  Our implementation (available at \url{https://github.com/kumiori/disclinations}) uses Lagrange  finite elements,
with the order of the elements configurable at runtime
to provide flexibility in the discretization of the equations. 
Specifically, we employ third-order Lagrange elements for both the transverse displacement and the Airy  potential, resulting in a piecewise linear approximation for the Gaussian curvature.
For the nonlinear problem, we utilize the SNES solver provided by PETSc.  The iterative subproblems are solved by using a semi-direct Krylov subspace method, with LU factorization for the preconditioning step.
In all our simulations the domain $\Omega$ is a unit disc centered at the origin, discretized using triangular (Delaunay) elements.
For a mesh size of $\eta= 0.02$ (which is relevant for the parametric simulation of Section \ref{2501041148}), the mesh consists of approximately 9k nodes and 19k elements.
This yields stiffness matrices of size 17k$\times$17k elements and 13m 
nonzero entries ($0.05\%$of the total).

The non-dimensional formulation involves two tunable parameters: the aspect ratio $\beta$ and $\gamma$, the relative transverse pressure to the Young’s modulus (see Eq. \eqref{eq:202501011638}).
We restrict the disclination  Frank angle to the characteristic values
$\{\pm \tfrac{1}{2},\pm 1\}$. This choice enables a  clear distinction between the positive and negative stress fields induced by positive and negative disclinations, corresponding to tension and compression.
All computations in this section use the non-dimensional formulation.

\subsection{Solver verification}\label{2501041124}

Two verification experiments test the reliability of the numerical framework with respect to exact solutions.
The first involves a plate without disclinations deformed under a transverse load, the second features no external loads, but an incompatibility caused by a pair of  symmetric disclinations.
We consider a very thin plate with $\beta=100$ and a mesh size $\eta = 0.05$, with 1.6k nodes and 3k elements.

\subsubsection{Test 1: kinematically compatible plate under transverse pressure}\label{sec:test1}

Consider Eq. \eqref{2408121717} with $\theta(\xi) = 0$ (no disclinations) and an applied transverse load
\begin{equation}
p(r) = \sqrt{2c_{\nu}^3} \left(\frac{40}{3} (1 - r^2 )^4 + \frac{16}{3} (11 + r^2 ) \right)
\end{equation}
where $r := |\xi|$.
The transverse load induces out-of-plane deflection, which activates  membrane deformations of elastic nature through the coupling.
The pair $(\wexact, \vexact)$,
\begin{equation}
\label{eq:202501011729}
\wexact(r) := \sqrt{2c_{\nu}} (1 - r^2 )^2, \qquad \vexact(r) := c_{\nu} \left(-\frac{1}{12} (1 - r^2 )^2 -\frac{1}{18} (1 - r^2)^3 -\frac{1}{24} (1 - r^2)^4 \right)
\end{equation}
is a known solution to Eq. \eqref{2408121717} (see \cite{Reiser}). In Figures \ref{fig:241091104}-LEFT and \ref{fig:241091104}-RIGHT we show profiles of the numerical solutions obtained from the three FE formulations described in Section $\ref{sec:IPCDformulation}$ for $\xi_2 = 0$.

\begin{figure}[h!]
\centering
\begin{minipage}[b]{0.45\textwidth}
	\centering
	\includegraphics[width=\textwidth]{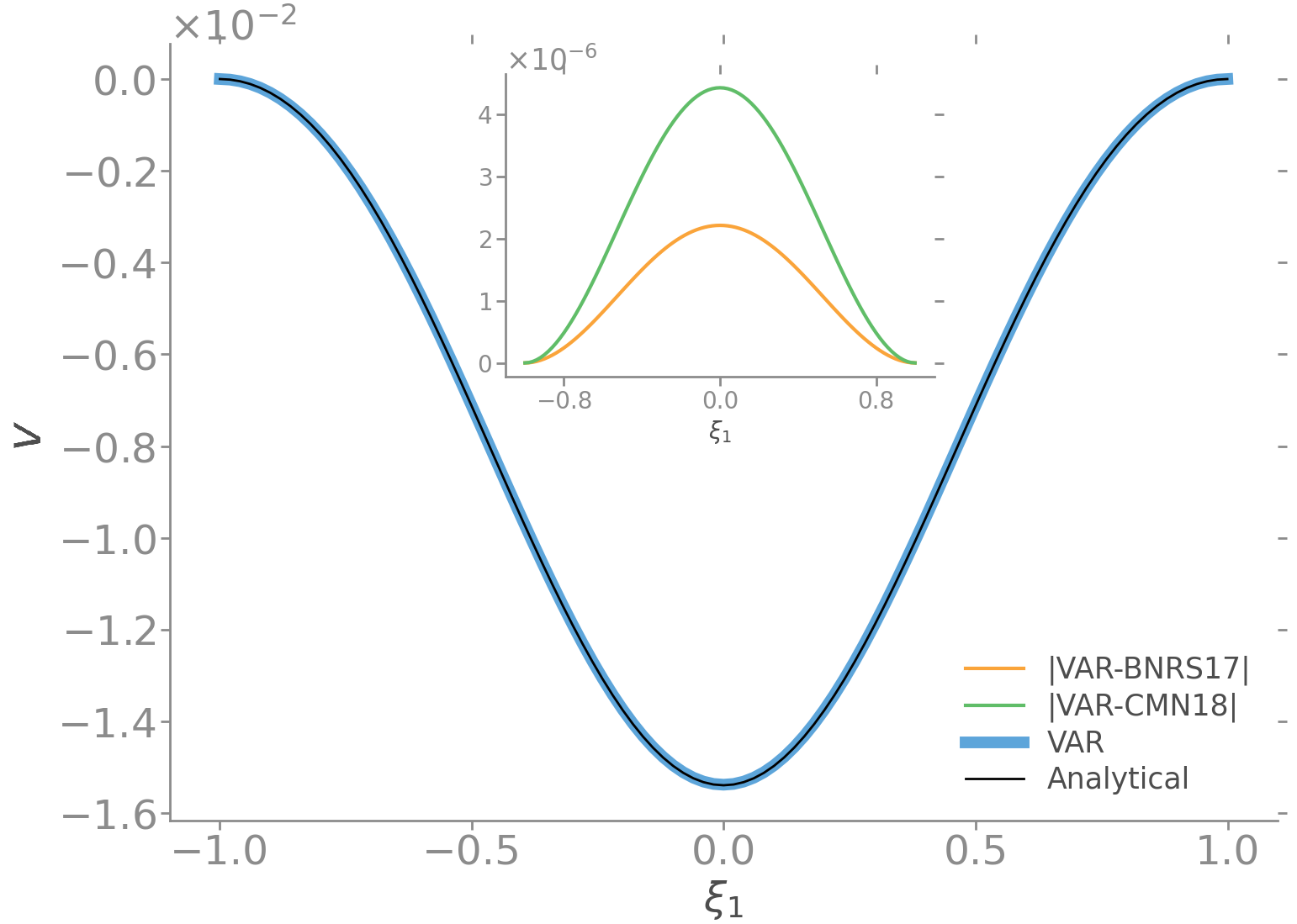} 
\end{minipage}
\hfill
\begin{minipage}[b]{0.45\textwidth}
	\centering
	\includegraphics[width=\textwidth]{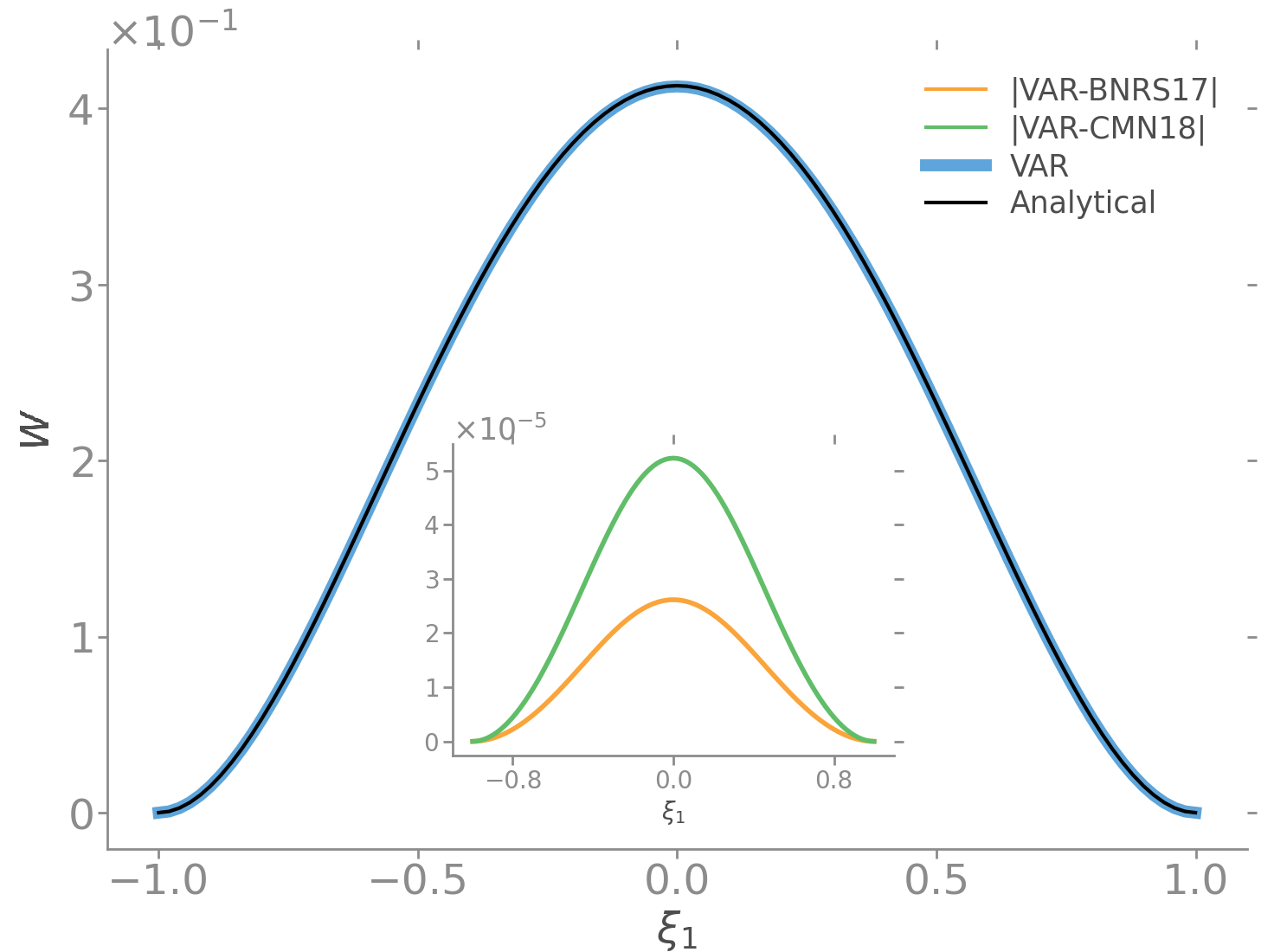} 
\end{minipage}
\caption{Profiles of $v(\xi_1, 0)$ (LEFT) and $w(\xi_1, 0)$ (RIGHT) for $\xi_1 \in [-1, 1]$. Variational formulation (VAR) as in Eqs. \ref{eq:202412271313} and \ref{eq:202412271314} (blue thick line), analytical solution (black thin line). In the inset, the difference between: VAR and BNRS17 as in \cite{Brenner:Von_Karman} (orange line), VAR and CMN18 as in \cite{Carstensen} (green line).}
\label{fig:241091104}
\end{figure}

We observe no significant deviations between the three FE formulations and the analytical solution.
For details, see Table \ref{2501042255} 
which presents the  relative errors for the bending and membrane energies and for the coupling term.

\begin{table}[H]
	\centering
	\begin{tabular}{lccc}
		\toprule
		\textbf{\qquad} & VAR (\eqref{eq:202412271313},
		\eqref {eq:202412271314}) & \cite{Brenner:Von_Karman} & \cite{Carstensen} \\
		\midrule
		$e_b$ & -0.085 \% & -0.074\% & -0.064\% \\
		$e_m$ & -0.632\% & -0.661\% & -0.690\% \\
		$e_c$ & -0.426\% & -0.428\% & -0.43\% \\
		\bottomrule
	\end{tabular}
	\caption{The energy percentage errors are computed as $\text{e}_X := 100 \times \frac{\mathscr{E}_X - \mathscr{E}_{X,\text{exact}} }{\mathscr{E}_{X,\text{exact}}}$, where $X$ stands for $b$, $m$ or $c$, corresponding, respectively to the bending, membrane and coupling integral terms ($\mathscr{E}_{b}$, $\mathscr{E}_{m}$ and $\mathscr{E}_{c}$) defined in Section \ref{sec:202501011737}. These errors are evaluated based on the three numerical solutions and the corresponding analytical solution \eqref{eq:202501011729}.}
	\label{2501042255}
\end{table}


\subsubsection{Test 2: kinematically incompatible plate with two disclinations}
We consider a circular plate with two symmetric disclinations of opposite Frank angles, modeled by 
Eq. \eqref{2408121717} with $\theta = \delta(\xi-y^{(1)})$ $- \delta(\xi+y^{(1)})$, where 
$y^{(1)} = (0.2, 0)$, and $p = 0$.
In this configuration, non-zero membrane strains and inelastic stresses are induced by the presence of the disclinations. The pair $(\wexact, \vexact) $ given by
\begin{equation}
\wexact(\xi) = 0 \qquad \vexact(\xi) := \beta^2 \mathscr{G}(\xi; y^{(1)}) -\beta^2 \mathscr{G}(\xi;-y^{(1)})
\end{equation}
solves Eq. \eqref{2408121717}.
Here, for $y$ fixed in $\Omega$
\begin{equation}
\label{eq:202412271400}
\mathscr{G}(\xi; y) := \begin{cases}
\displaystyle \frac{1}{16\pi} \left(\left(1-\xi^2\right) \left(1-y^2\right)+|\xi-y|^2 \displaystyle \ln  \frac{|\xi-y|^2}{\xi^2 y^2 - 2\xi \cdot y + 1} \right) \quad & \text{for } \xi \ne y \\
0 \quad                                                                                                                                                               & \text{otherwise}
\end{cases}
\end{equation}
is  known from the theory of Green's functions (see \cite{PolyharmonicGreenFunction}).
A comparison of the numerical solutions, based on the three FE implementations is reported in Figure \ref{fig:241091050}, showing excellent agreement with the exact solution.
We omit the profile of the 	transverse displacement $w$, as it is found to be zero in all   three implementations.
The relative error for the membrane energy is $-0.518\%$ for all the three FE formulations described in Section $\ref{sec:IPCDformulation}$.

\begin{figure}[h!]
\centering
\includegraphics[width=0.5\textwidth]{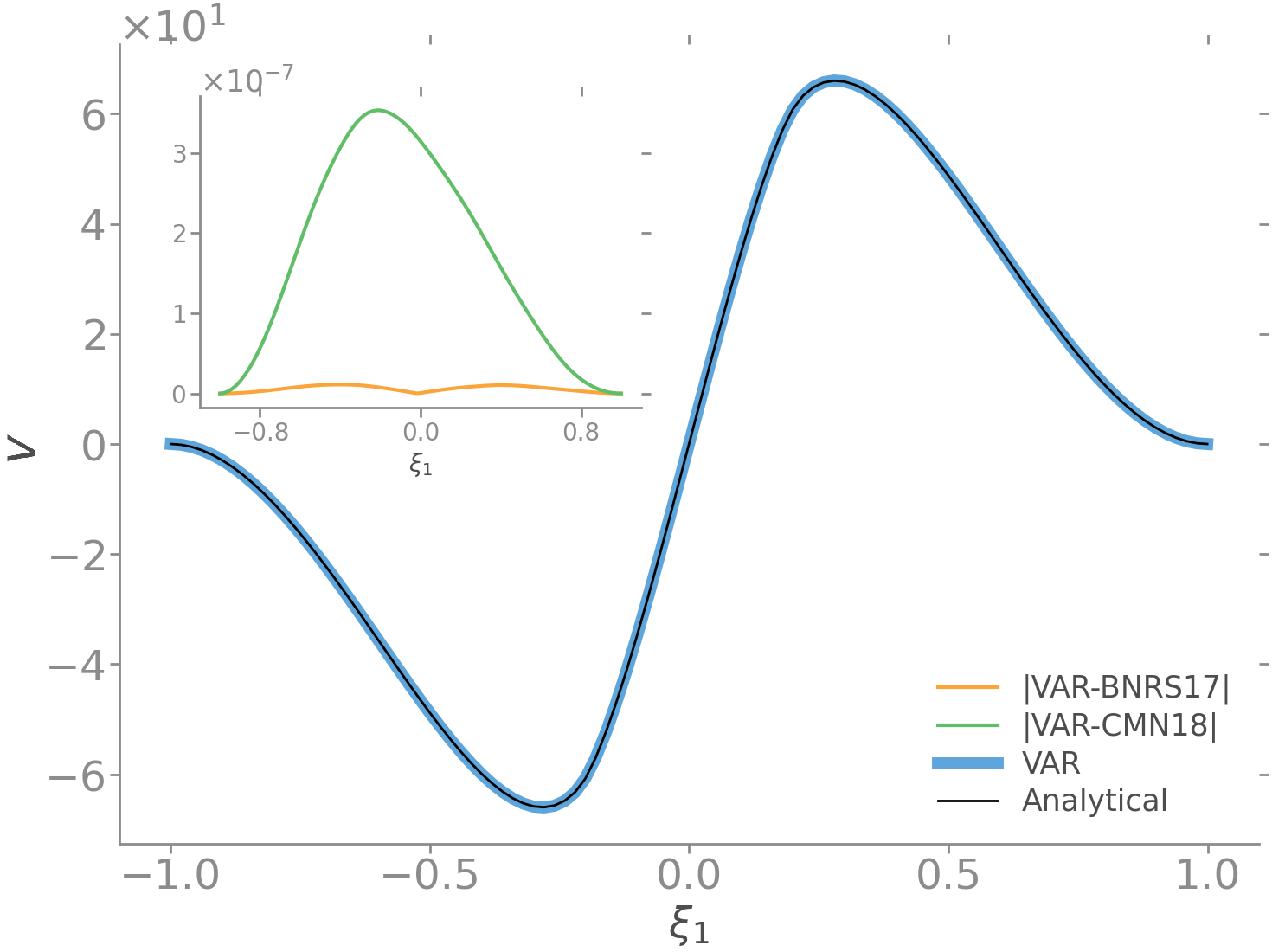} 
\caption{Profile of $v(\xi_1, 0)$ for $\xi_1 \in [-1, 1]$. Variational formulation (VAR) as in Eqs. \ref{eq:202412271313} and \ref{eq:202412271314} (blue thick line), analytical solution (black thin line). In the inset, the difference between: VAR and BNRS17 as in \cite{Brenner:Von_Karman} (orange line), VAR and CMN18 as in \cite{Carstensen} (green line).}

\label{fig:241091050}
\end{figure}

\subsection{Parametric study varying slenderness and load ratio}\label{2501041148}

We perform sets of numerical simulations varying the parameters $\beta$ and $\gamma$.
The purpose   is to explore the non-linear coupling in the FvK equations across various regimes in the presence of disclinations.
First, we vary $\beta$  (plate aspect ratio) while keeping $\gamma \beta^4$ (the prefactor of the transverse external load) constant.
Secondly, we vary $\gamma$ keeping $\beta$ constant.
The first experiment corresponds to studying the mechanics of the plate by varying the influence of the disclination for a fixed external load.
The second corresponds to varying the external load while maintaining the effect of the disclination constant.

Throughout this section we place one disclination at the origin with Frank angle $s = -1$ and consider $p(\xi) = -1$.

\subsubsection{The effect of the aspect ratio $\beta$}
\label{sec:202412091330}
We fix $\gamma \beta^4 = 1$ while varying the plate aspect ratio $\beta$ in the range $[10, 100]$, 
plotting the scaling of the different  energy components in Fig. \ref{fig:24108179}.
%
%
%
The membrane energy grows like $\beta^4$, as a consequence of a partial decoupling of the membrane equation. In this regime, the out-of-plane deflection as well as the  bracket $[w, w] $ are small compared to the other terms in the equation for the stress potential.
This is clearly illustrated in  Fig. \ref{fig:24108179}.  Neglecting $ [w, w] $ in  
the second equation of the system   $\ref{2408121717}$, the membrane equation reduces to
\begin{equation} \label{2408121226}
\Delta^2 v =   -\beta^2\delta(\xi) \quad \text{in }  \O.\\
\end{equation}
In Eq.  $\ref{2408121226},$ the membrane stress is autonomous, and can be immediately solved using its own Green’s function	$H^2_0(\Omega)\ni \hat v =-	\beta^2 \mathscr{G}(\xi; 0)$, where $\mathscr{G}$ is given in Eq. $\ref{eq:202412271400}$.
%
%
%
Eq. \ref{2408121226} corresponds to the  Kirchhoff-Love (KL) membrane equation 
for a kinematically incompatible thin plate	\cite{Kirchhoff-Love}. 
In  Fig. $\ref{fig:24108179}$ we show
the 	profile of the membrane energy computed using the KL model (based on the solution to Eq. \ref{2408121226}) and the membrane energy   of the full FvK model, showing excellent   agreement.
For completeness, in Fig. \ref{fig:24108179} we also present
a comparison between the bending energy computed according to the KL model and the full FvK model.  The energy   according to the KL model is given by the solution to the following KL-type equation  	
\begin{equation} \label{2501171042}
c_\nu \Delta^2 w =  - \gamma \beta^4 \quad \text{in }  \O.\\
\end{equation}
The profiles do not match because the effect of the disclination, via the coupling term $[v,w]$ is neglected in \ref{2501171042}. However, this effect is  not small.
The effect of the negative disclination is to reduce the bending energy by flattening the profile of $w$.

\begin{figure}[h!]
\centering
\includegraphics[width=0.7\textwidth]{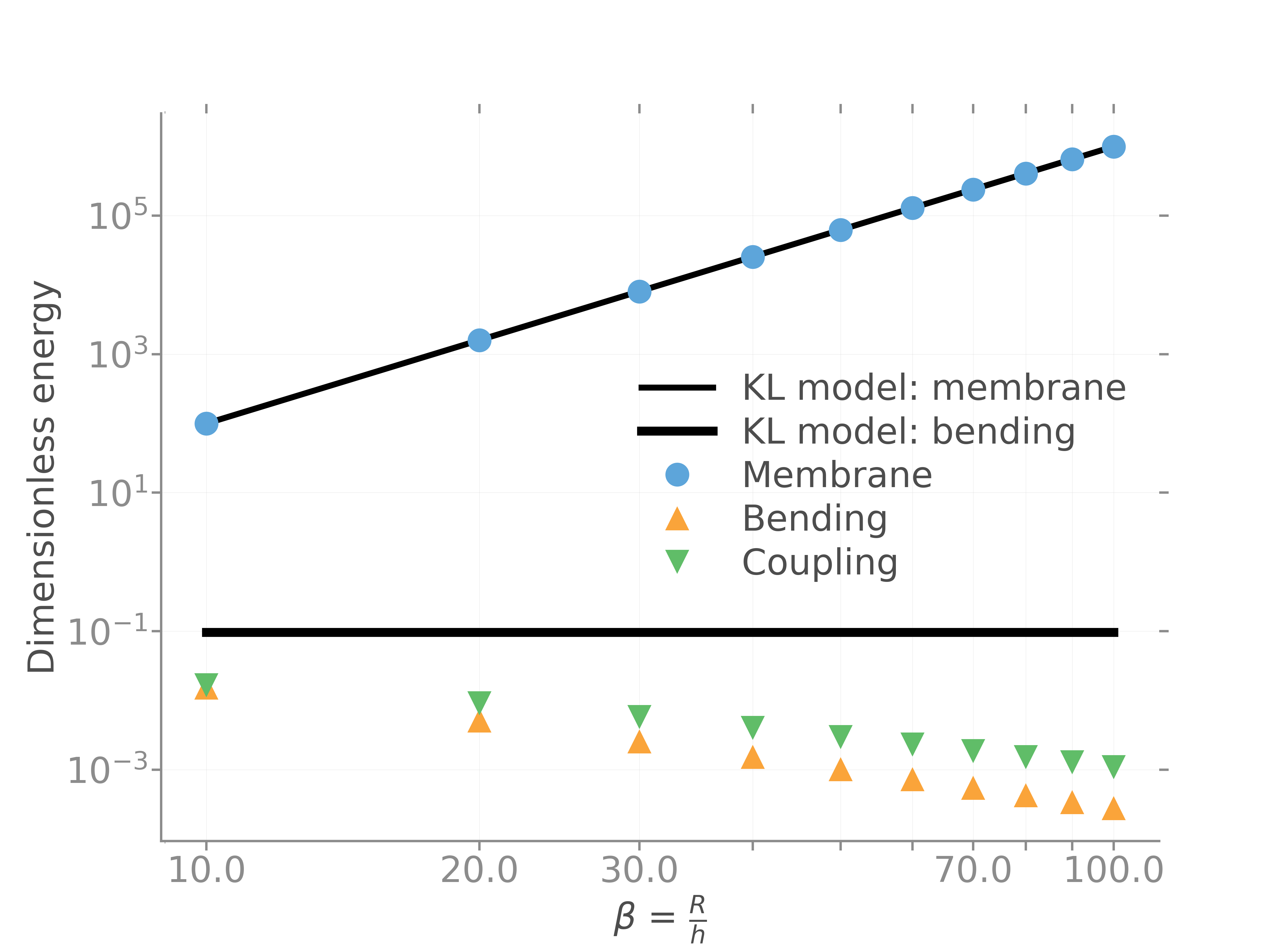} 
\caption{Membrane and bending energies and coupling term. Membrane (blue circles). Bending (orange triangles). Coupling (green  triangles). Membrane energy according to Kirchhoff-Love can be computed exactly and is equal to $\mathscr{E}_{\text{m}}^{\text{KL}} = \frac{\beta^4}{32\pi} $ (black thin line). Bending energy predicted by the Kirchhoff-Love theory,  $\mathscr{E}_{\text{b}}^{\text{KL}} = \frac{\pi}{384 c_{\nu}} (\gamma\beta^4)^2 $ (black thick line). Plot in log-log scale.}
\label{fig:24108179}
\end{figure}
Because the disclination angle $s$ is negative, the stress potential induces a radial tensile stress field.
Denoting by $e_r$ the radial unit vector, this is shown in Figure \ref{fig:2412092103}, where the radial stress field $\sigma_{rr} := \sigma e_r \cdot e_r$, computed for $\beta = 10$, is everywhere non-negative. 
\begin{figure}[h!]
\centering
\includegraphics[width=0.3\textwidth]{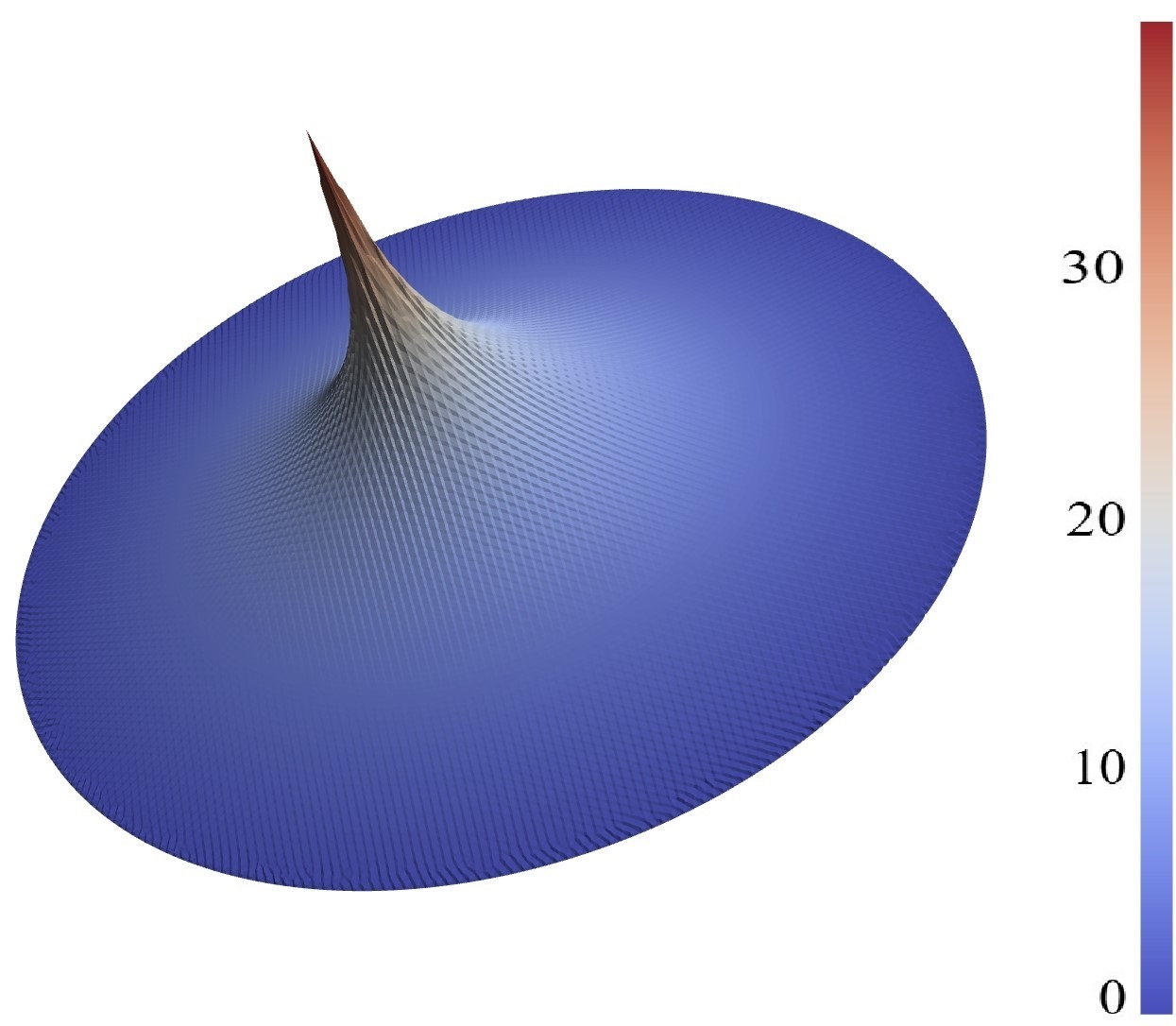} 
\caption{Dimensionless $\sigma_{rr}$ computed for $\beta = 10$.}
\label{fig:2412092103}
\end{figure}
This observation also justifies the scaling of the   bending and coupling terms in Fig. \ref{fig:24108179}.
Indeed, a plate subjected to a combination of a transverse load and a tensile membrane stress experiences smaller transverse deformation compared to the same plate without the tensile stress (see \cite{timoshenko1959theory}, Section 94, pages 391-393).
Hence, both the dimensionless bending and coupling integrals, which depend on the deflection, decrease as $\beta$ (which represents the influence  of the disclination) increases.

In Figure \ref{fig:202412171702}-LEFT and \ref{fig:202412171702}-RIGHT we report the profiles of, respectively, the dimensionless Airy stress function $v$ and transverse displacement $w$ obtained for $\beta = 10$ and $\beta = 100$. We can see that the profile of $v$ is self-similar with a scale factor $\beta^2$, confirming that the membrane equation is decoupled from the bending physics.
\begin{figure}[htbp]
\centering
\begin{minipage}{0.48\textwidth}
\centering
\includegraphics[width=\textwidth]{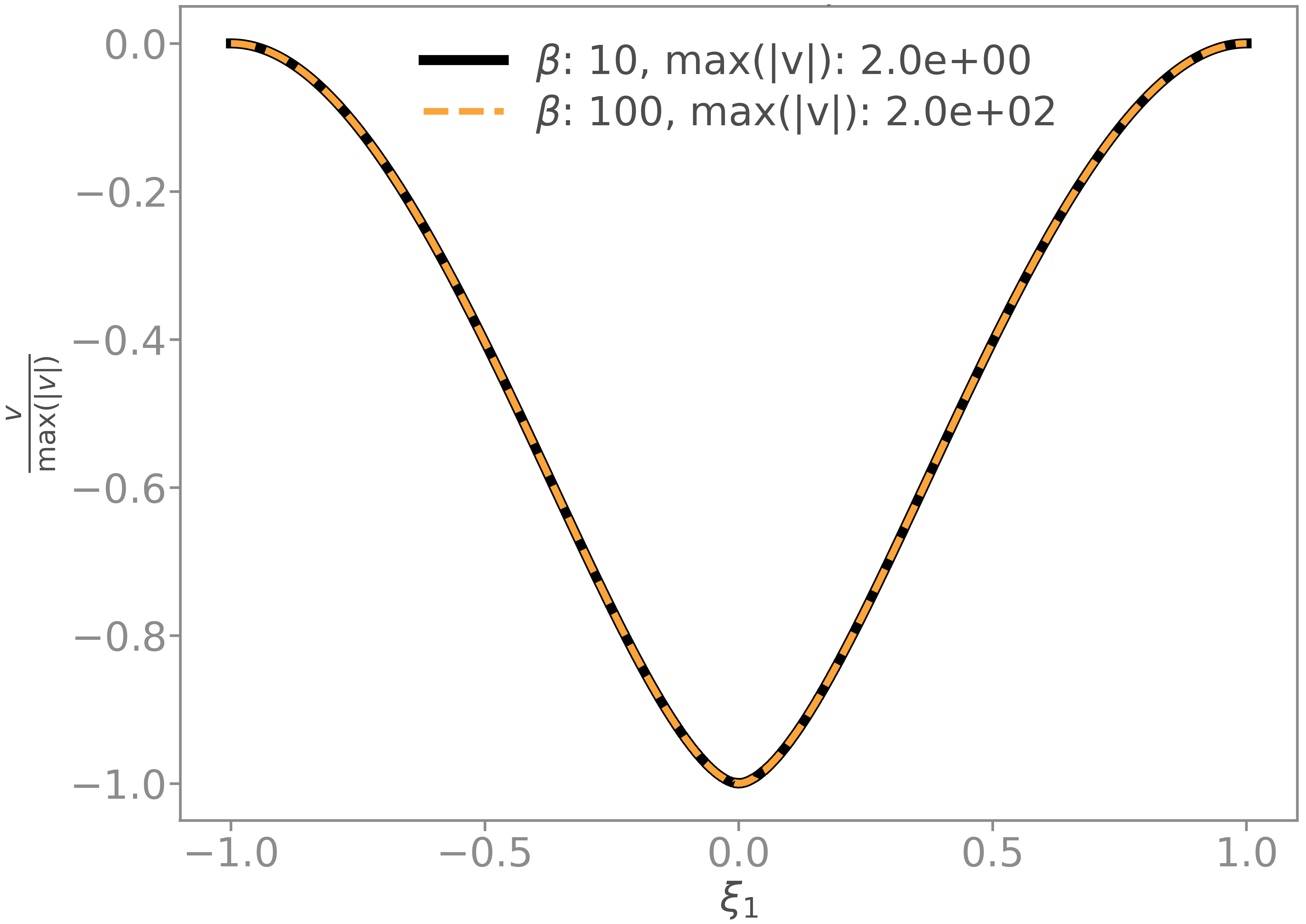} 
\end{minipage}
\hfill
\begin{minipage}{0.48\textwidth}
\centering
\includegraphics[width=\textwidth]{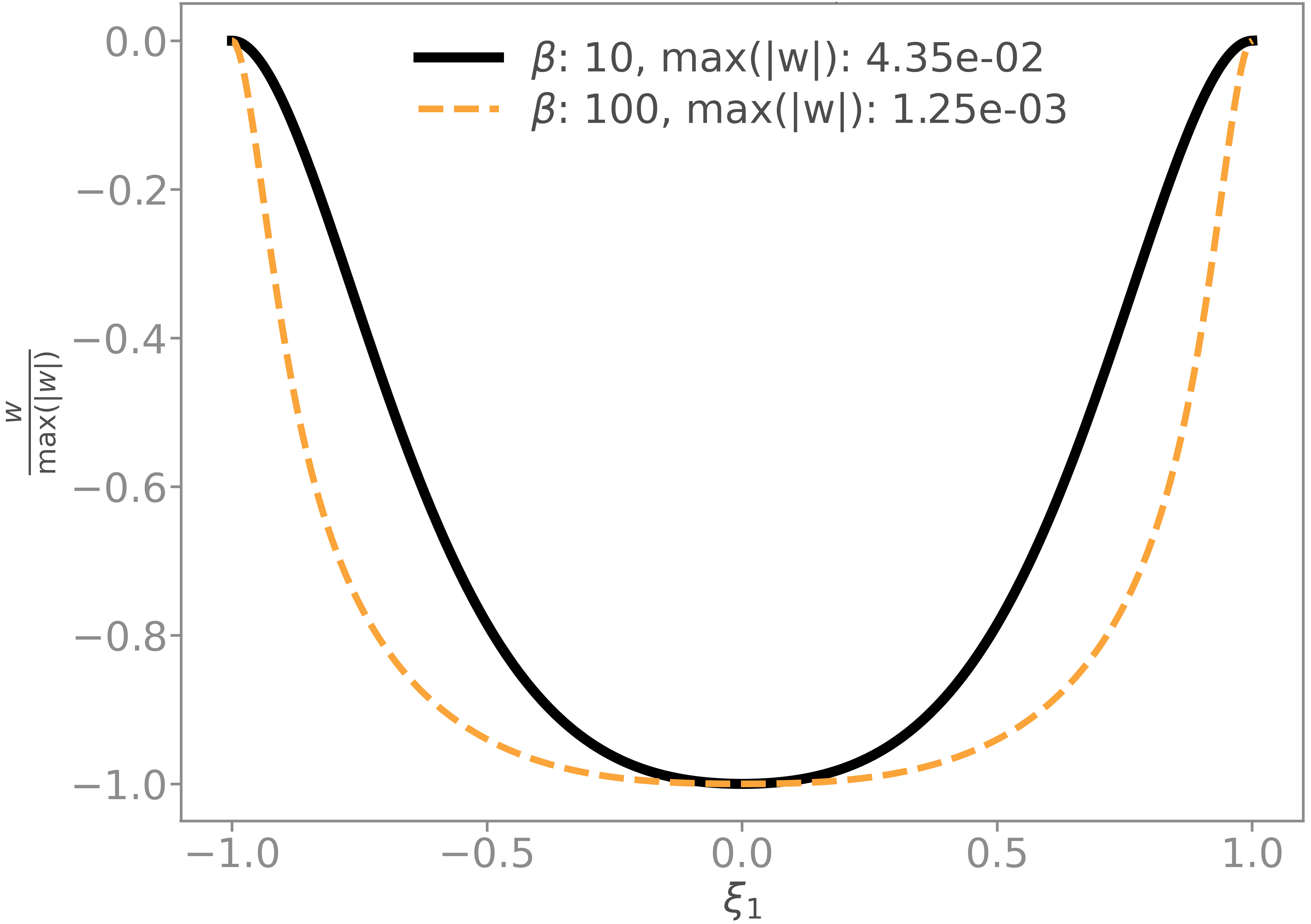} 

\end{minipage}
\caption{Profiles   of the normalized Airy potential (LEFT) and normalized transverse deflection (RIGHT). Functions evaluated for $\xi_1 \in [-1, 1]$ and $\xi_2 = 0$ and normalized with respect to their maximum value in the same range of $(\xi_1, \xi_2)$. $\beta = 10$ (black line), $\beta = 100$ (orange dashed line).}
\label{fig:202412171702}
\end{figure}
The transverse deformation, conversely, exhibits a plateau in the middle of the plate which extends with $\beta$ and corresponds to the region where the tensile stress field is strongest.

\subsubsection{The effect of the dimensionless external load $\gamma$}
We fix $\beta = 20$
and take $\gamma\in [6 \times 10^{-7}, 2\times 10^{-2}]$.
For small $\gamma \lesssim 2\times  10^{-3}$, both
coupling and   bending effects
are small (see Fig. \ref{fig:241081710}). Consequently, 
the full FvK membrane energy is well approximated by the  Kirchhoff-Love estimate, where $v$ is again given by
the solution to Eq. \ref{2408121226}.
In this regime the profile of the  membrane energy is flat,
because it is governed by the KL theory, calculated using the solution to Eq. \ref{2408121226} and  depends on $\beta$, not $\gamma$.
%
%
%
%
%
Conversely, the profiles for the bending energy calculated according to the FvK and KL models, as expected, do not match.  However they share a similar $\gamma^2$-dependence. The reason is as follows.
By replacing $v$ with $\hat v$   in the first equation of system \ref{2408121717}, the bending equation for FvK becomes linear in $\gamma$, and consequently,  the energy is quadratic in $\gamma$.
Likewise for the coupling term (suffices to substitute		$\hat v$).


\begin{figure}[h!]
\centering
\includegraphics[width=0.6\textwidth]{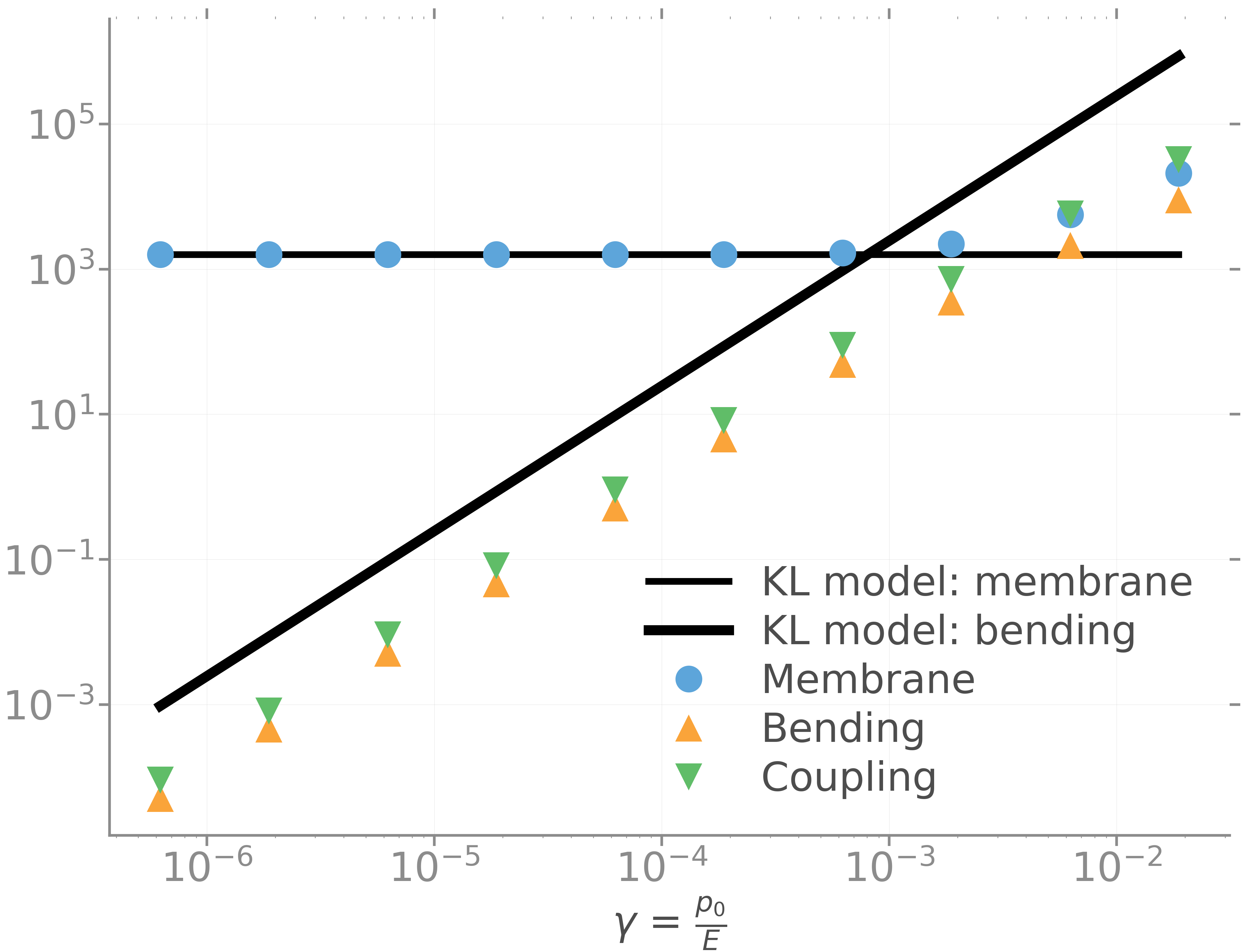} 
\caption{Dimensionless membrane and bending energies and coupling term. Membrane energy (blue circles), bending energy (orange triangles), coupling (green upside-down triangles), membrane energy calculated according to the KL theory $\mathscr{E}_{\text{m}}^{\text{KL}} = \tfrac{\beta^4}{32\pi} $ (black thin line), bending energy predicted by the KL theory $\mathscr{E}_{\text{b}}^{\text{KL}} = \tfrac{\pi}{384 c_{\nu}} (\gamma\beta^4)^2 $ (black thick line). Plot in log-log scale.}
\label{fig:241081710}
\end{figure}
For large $\gamma \gtrsim 2\times 10^{-3}$, the full non-linear coupling between the Airy potential and the deflection is activated, and consequently, the Kirchhoff-Love approximation is no longer valid.
This behavior can be observed in Figure \ref{fig:2412080521}-LEFT, where we report the profiles of the dimensionless $v$ obtained for three values of $\gamma$.
For small $\gamma=6.25\times  10^{-7}$ and $\gamma= 6.25\times 10^{-4}$
the shape of $v$ remains essentially unchanged. The absolute value of the stress potential increases by only about $2.5\%$.
Conversely, for a large value $\gamma = 1.875\times  10^{-2}$,
the stress potential is largely distorted by the large  Gaussian curvature of the plate measured by  $[w,w]/2$.

\begin{figure}[htbp]
\centering
\begin{minipage}{0.48\textwidth}
\centering
\includegraphics[width=\textwidth]{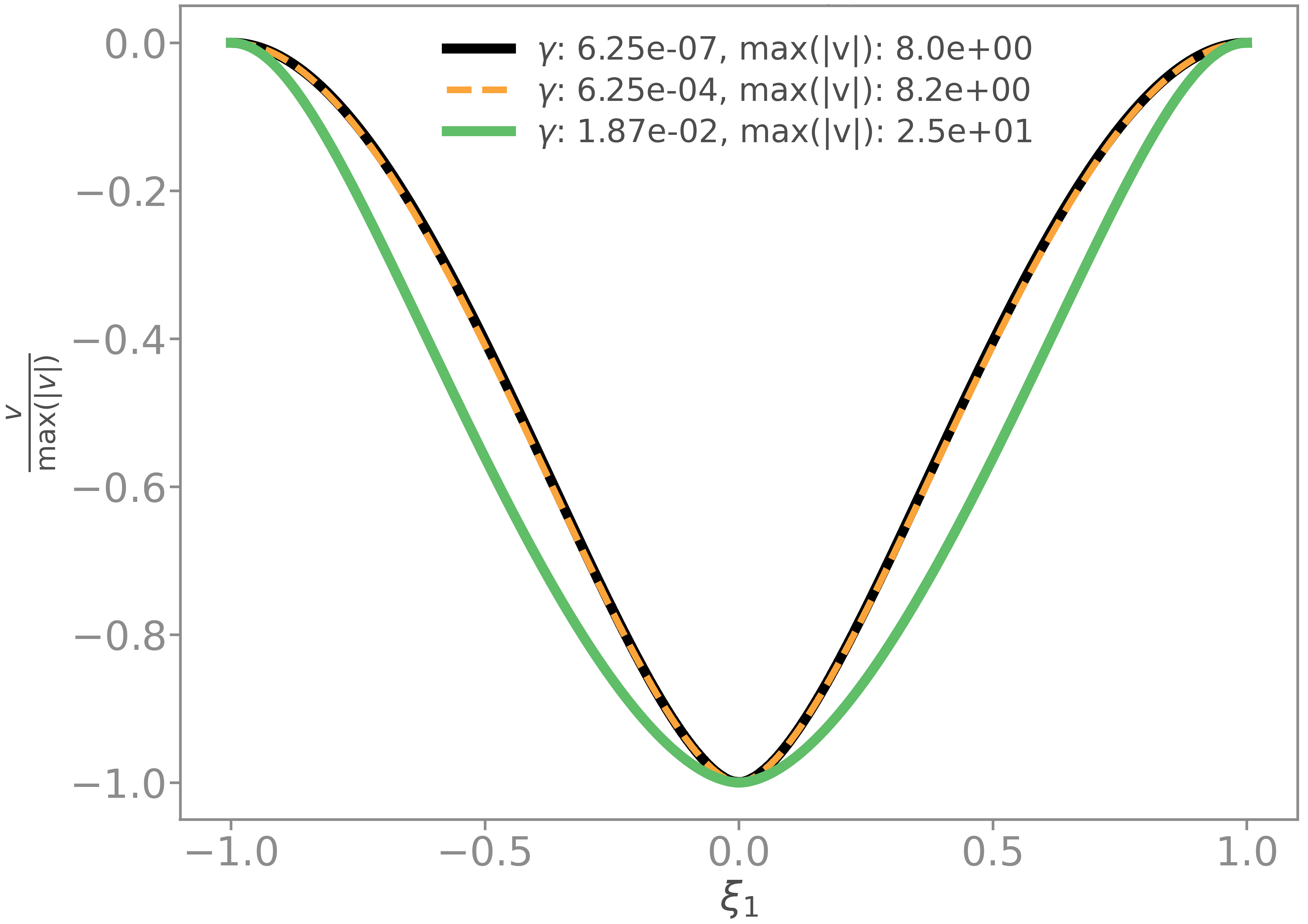} 
\end{minipage}
\hfill
\begin{minipage}{0.48\textwidth}
\centering
\centering
\includegraphics[width=\textwidth]{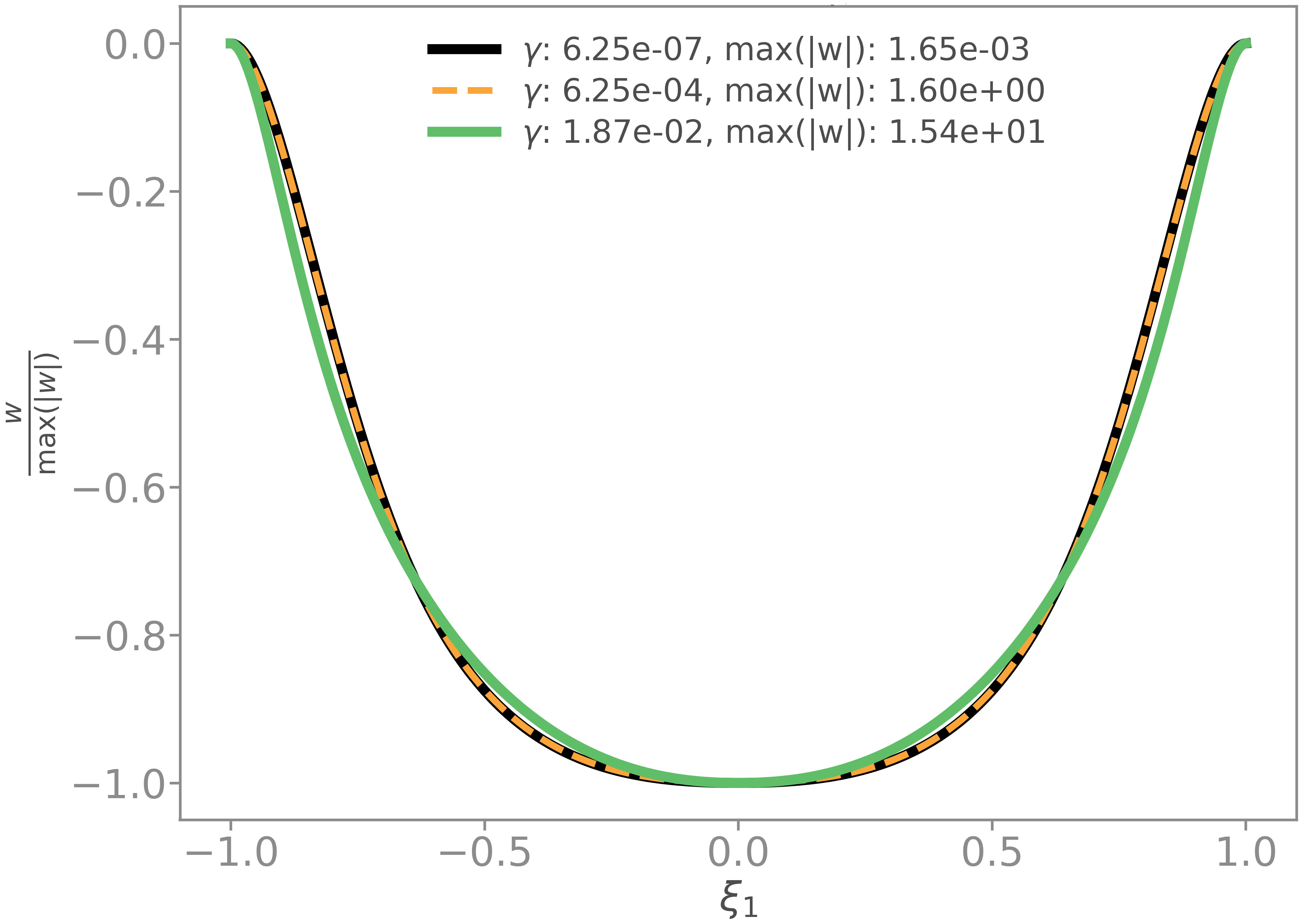} 

\end{minipage}
\caption{Profiles   of the normalized Airy potential (LEFT) and normalized transverse deflection (RIGHT). These functions are evaluated for $\xi_1 \in [-1, 1]$ and $\xi_2 = 0$ and normalized with respect to their maximum value within the same range of $(\xi_1, \xi_2)$. The following values of $\gamma$ are considered: $\gamma = 6.25\times 10^{-7}$ (black line), $\gamma = 6.25\times 10^{-4}$ (orange dashed line), $\gamma = 1.875\times 10^{-2}$ (green line).}
\label{fig:2412080521}
\end{figure}

In Figure \ref{fig:2412080521}-RIGHT we report the profile of $w$ obtained for the three different values of $\gamma$. In the region of small $\gamma$, the solutions are self-similar when rescaled by $\gamma$.
For large $\gamma$, this property, and consequently the validity of the linear approximation, no longer hold.

\subsection{Multi-disclination configurations}\label{2501041149}

Inspired by observations of complex   patterns in graphene, this section explores configurations where the presence of disclinations induces non-trivial Gaussian curvature in a circular plate.
In graphene, such configurations typically involve pairs of disclinations with opposite Frank angles placed at close distances to screen out their respective stress fields. These configurations include dipoles \cite{TSUCHIDA24} and Stone-Wales defects \cite{Kabir16}.
In the following simulations we assume  $\beta = 20$, $\gamma = 5 \times 10^{-8}$, $p(\xi) =-1$
and the mesh size $\eta=0.02$. 
\subsubsection{Non-zero total Frank angle}\label{}

We explore configurations with four disclinations of equal Frank angles, yielding high-stress solutions.  
This setup, although unrealistic for real materials, results in plates with non-trivial curvature patterns due to the nonlinear coupling.
%
We consider  four disclinations (each with an  equal Frank angle $s$, to be specified later)
arranged symmetrically according to the following geometry
\begin{equation}
\label{eq:202412091308}
{	\theta(\xi) = s \left( \delta(\xi-y^{(1)}) + \delta(\xi-y^{(2)}) + \delta(\xi-y^{(3)}) + \delta(\xi-y^{(4)}) \right)}
\end{equation}
%
with $y^{(1)} = (\frac{1}{2}, 0)$, $y^{(2)} = (0, \frac{1}{2})$, $y^{(3)} = (-\frac{1}{2}, 0)$, $y^{(4)} = (0, -\frac{1}{2})$ in a unit disk.

First we set $s = -\frac{1}{2}$ in  Eq.  \eqref{eq:202412091308}.
In Figures $\ref{fig:fournegativedisc}$ we display $\sigma_{rr}$, everywhere non-negative, indicating a radial tensile stress state (see discussion in Section \ref{sec:202412091330}).
\begin{figure}[htbp]
\centering
\includegraphics[width=0.8\textwidth]{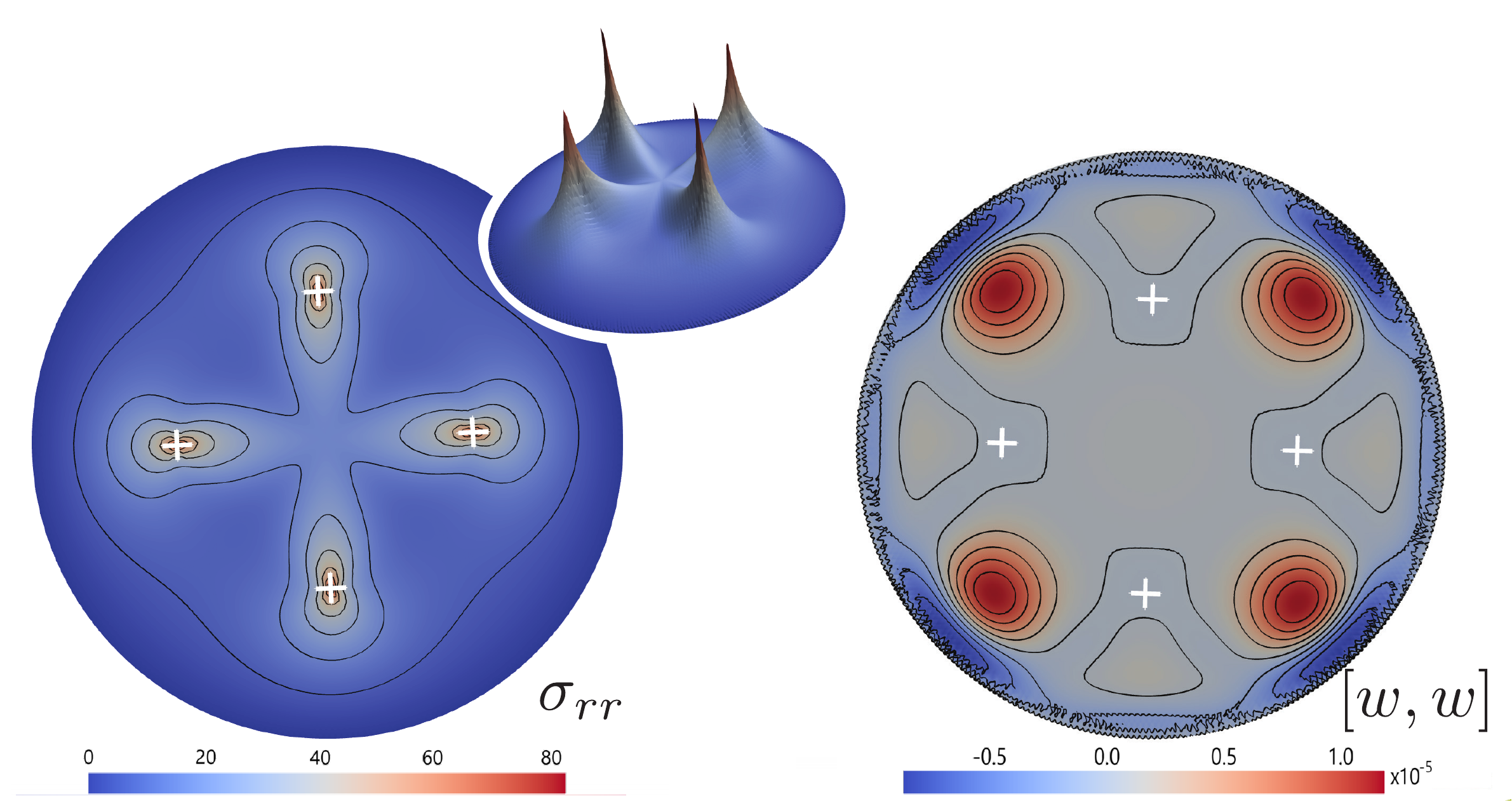} 
\caption{Configuration with four negative disclinations (white crosses) with $s=-\tfrac{1}{2}$. LEFT: radial stress $\sigma_{rr}$. Top and side views. RIGHT: level curves and heat map of $[w,w]$.}
\label{fig:fournegativedisc}
\end{figure}
\begin{figure}[htbp]
\centering
\begin{minipage}{0.4\textwidth}
\includegraphics[width=\textwidth]{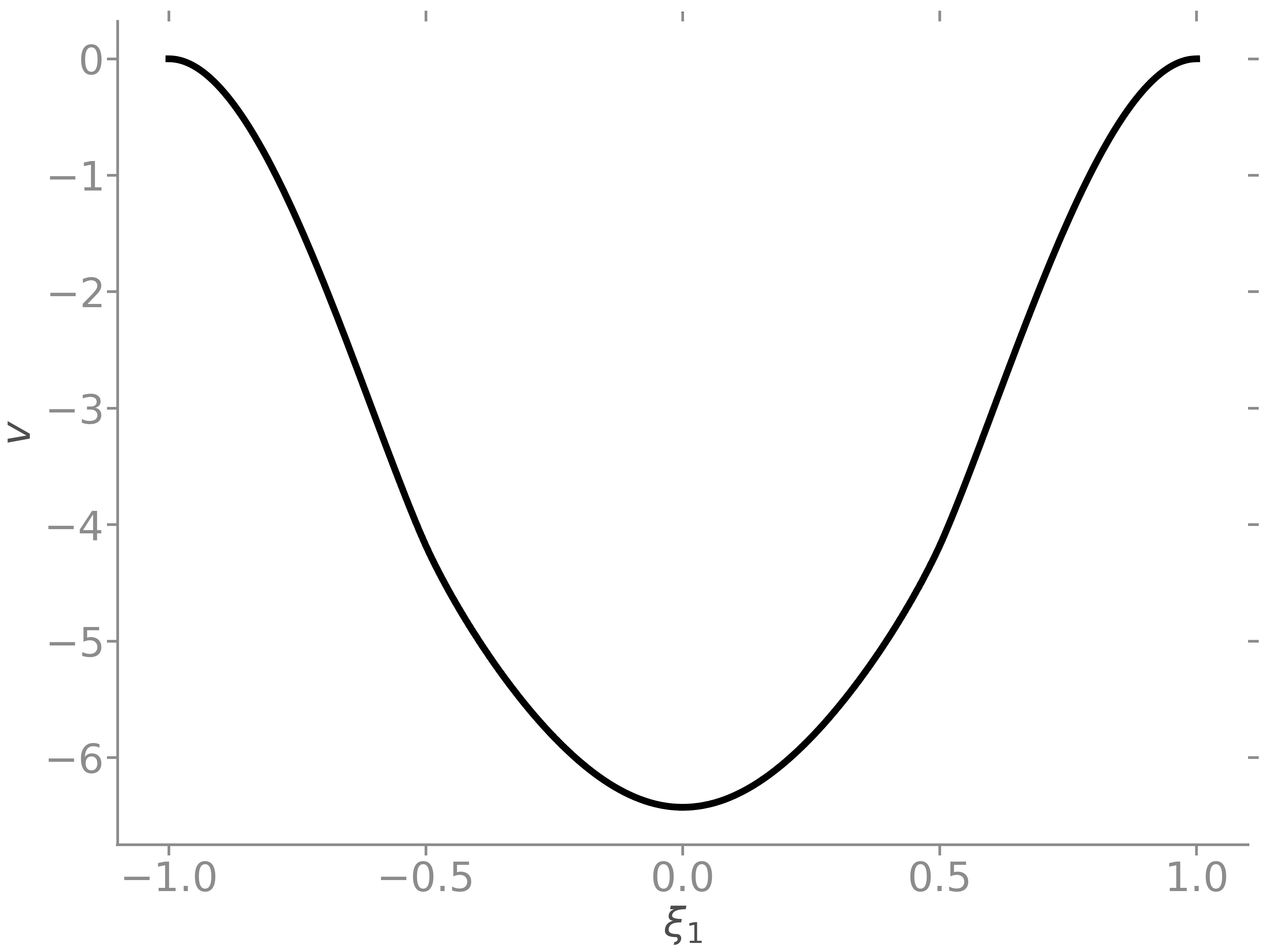}
\end{minipage}
\hfill
\begin{minipage}{0.4\textwidth}
\includegraphics[width=\textwidth]{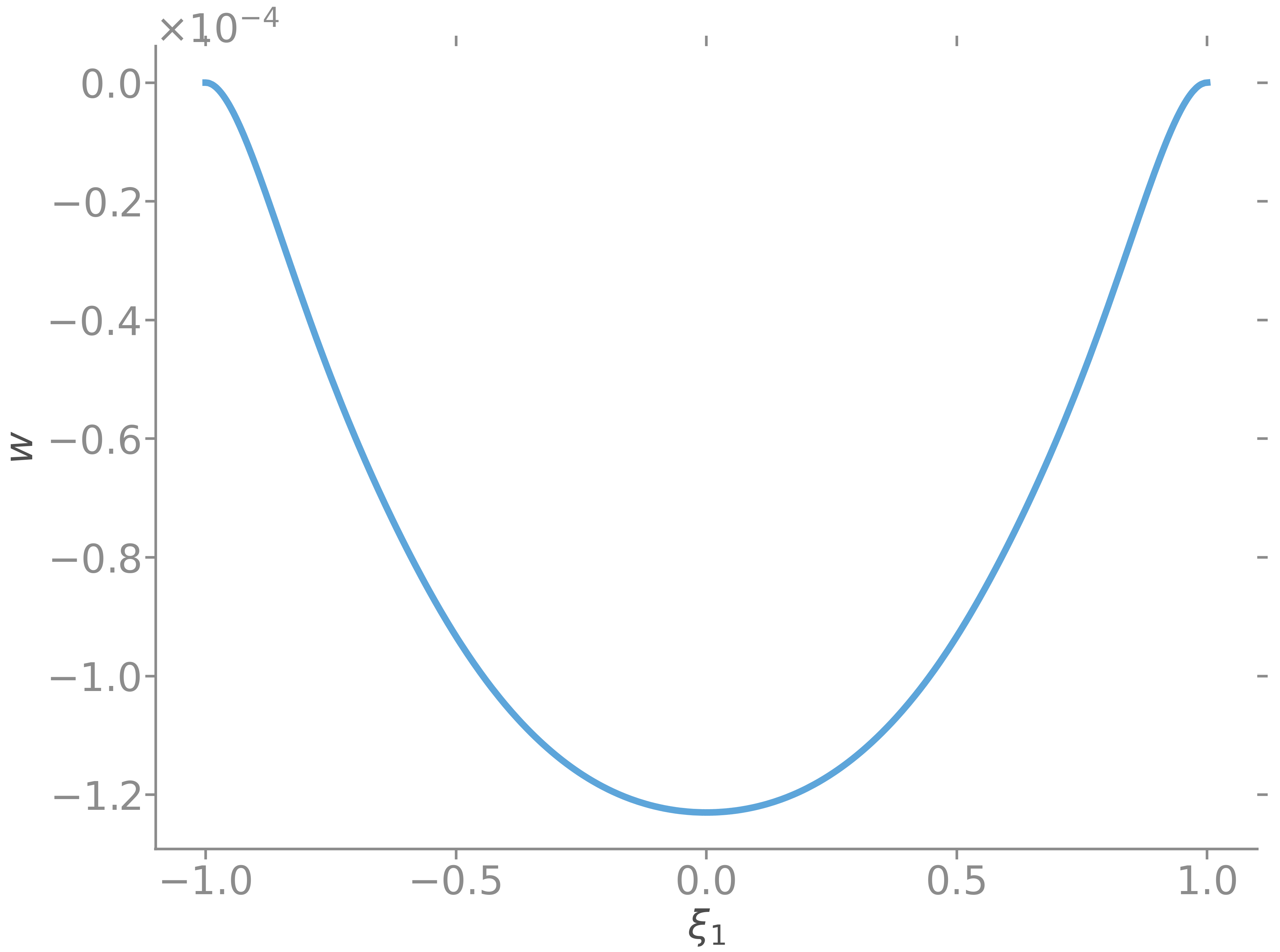}
\end{minipage}
\caption{LEFT: $v$. RIGHT: $w$, configuration of four negative disclinations with $s=-\tfrac{1}{2}$. Profiles evaluated for $\xi_1 \in [-1,1] $ and $\xi_2 = 0$.}
\label{fig:202412072139}
\end{figure}
%
%
%
%
%
Then, taking $s = +\frac{1}{2}$ in Eq. \eqref{eq:202412091308} yields a radial compressive mechanical stress (see Figure \ref{fig:fourpositivedisc}, the plot of $\sigma_{rr}$ is negative in $\Omega$).
%
%
\begin{figure}[htbp]
\centering
\includegraphics[width=1\textwidth]{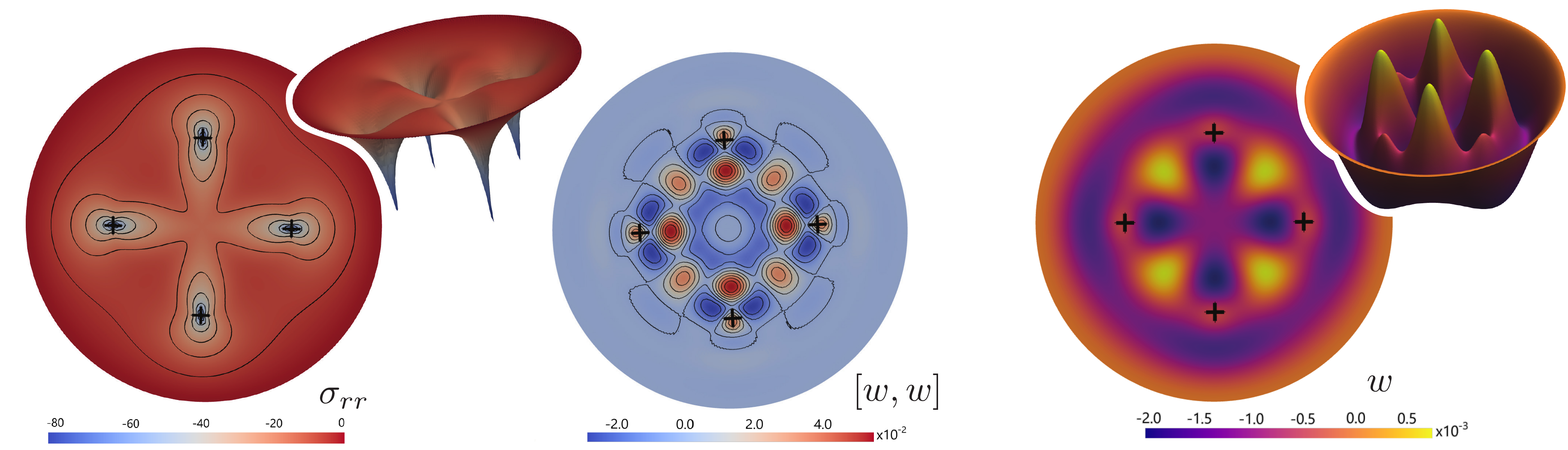} 
\caption{Configuration with four positive disclinations (black crosses) with $s=-\tfrac{1}{2}$. LEFT: radial stress $\sigma_{rr}$. Top and side views. CENTER: level curves and heat map of $[w,w]$. RIGHT: transverse displacement. Top and side views.}
\label{fig:fourpositivedisc}
\end{figure}
We report profiles of Airy functions and deflection along slices in Figures $\ref{fig:202412072139}$-LEFT and $\ref{fig:202412072139}$-RIGHT, respectively, for the case $s=-\frac{1}{2}$
and
in Fig. \ref{fig:202412072147}
for the case $s=\frac{1}{2}$.
In the case of negative disclinations, the Airy potential is negative. 
The behavior of the deflection is  complex.
For configurations with negative disclinations, we observe that the plate bends downward. For positive disclinations, although the deflection
$w$ is also negative, the compressive stress induced by the positive disclination distribution amplifies the bending of the plate. This effect leads to the formation of a fine folding pattern on the elastic surface, a phenomenon typical of clamped plates subjected to compressive membrane loads.
It is important to note that in this experiment, the plate is free from any external compressive loads; the stress state is solely induced by the presence of positive disclinations.

\textcolor{black}{An investigation into wrinkling and the associated mechanical tension states, as well as their relationship to material and geometric parameters and the resulting length scales, goes beyond the scope of our paper.
We refer to  \cite{DeS00},
\cite{DeS02}, where a detailed 3D-to-2D variational analysis has been conducted for thin structures to characterize and provide quantitative estimates of energy scaling under various regimes and wrinkling patterns
and \cite{gioia94} for investigation of folding patterns in FvK plates. 
For a review of wrinkling in graphene, we refer to \cite{deng16}.}

\begin{figure}[htbp]
\centering
\begin{minipage}{0.4\textwidth}
\centering
\includegraphics[width=\textwidth]{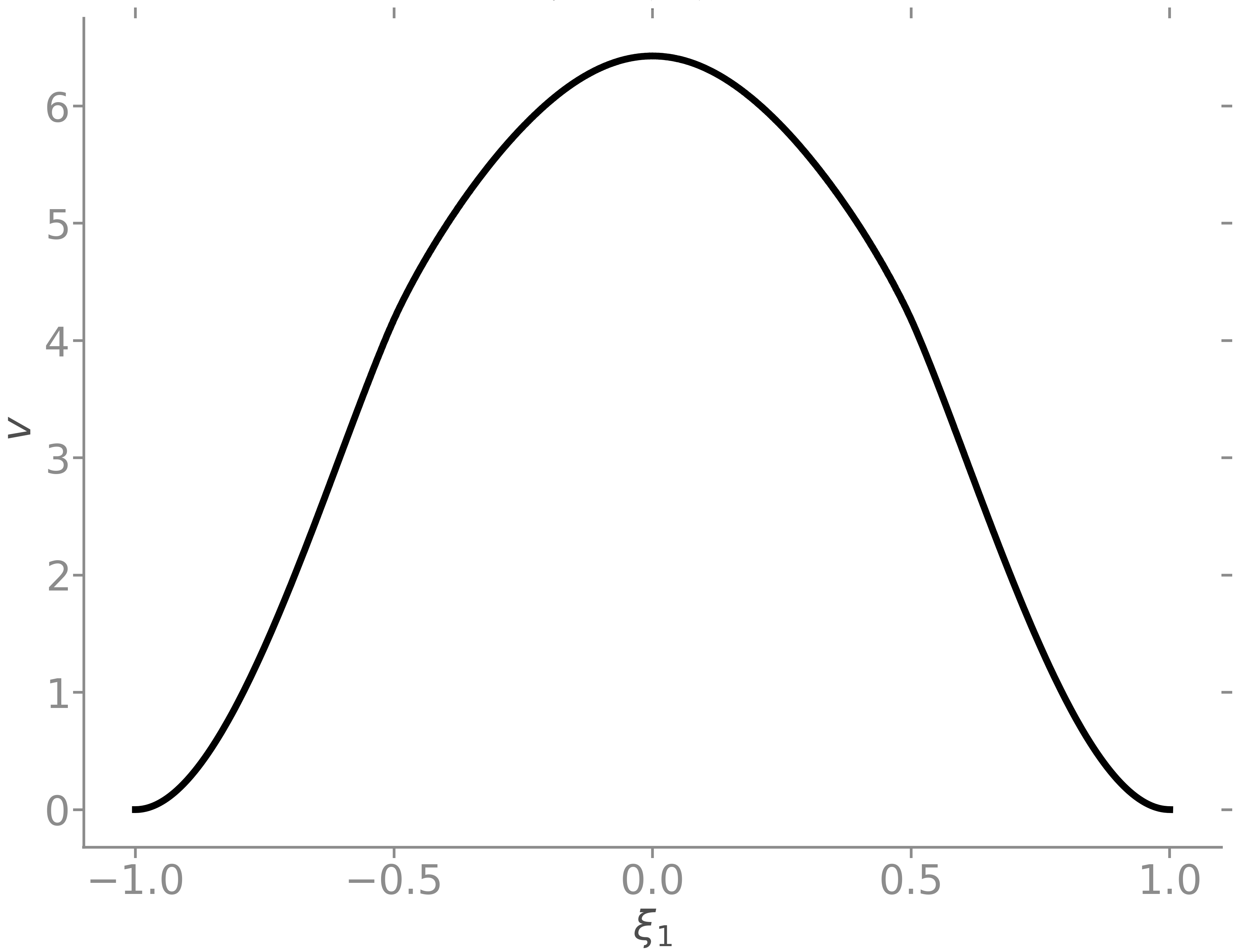} 
\end{minipage}
\hfill
\begin{minipage}{0.4\textwidth}
\centering
\includegraphics[width=\textwidth]{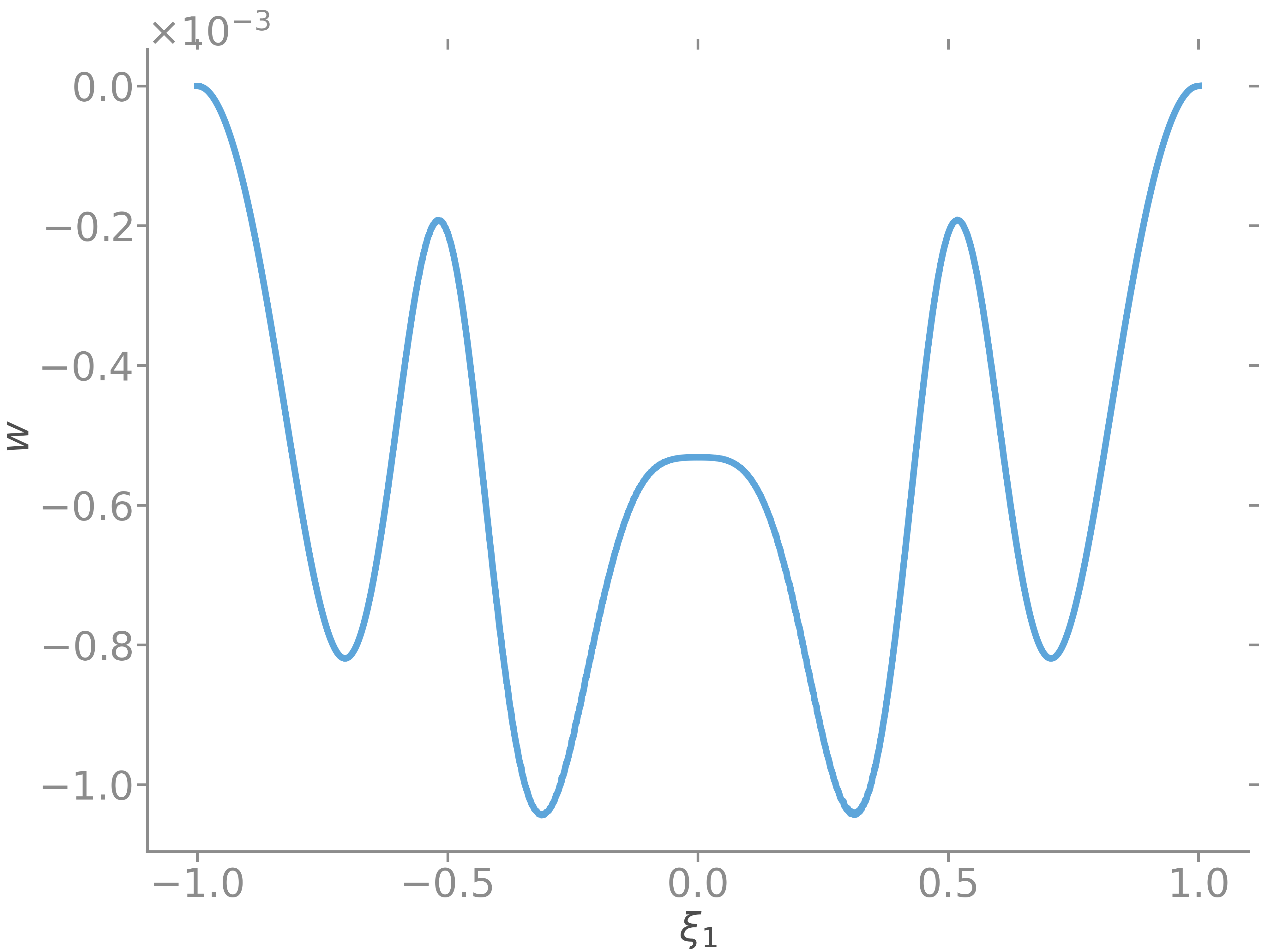}
\end{minipage}
\caption{LEFT: $v$. RIGHT: $w$, configuration of four positive disclinations with $s=\tfrac{1}{2}$. Profiles evaluated for $\xi_1 \in [-1,1] $ and $\xi_2 = 0$.}
\label{fig:202412072147}
\end{figure}
Recall that the quantity $[w, w]$ is twice the Gaussian curvature.
As shown in Figure \ref{fig:fournegativedisc}-RIGHT for $s=-\frac{1}{2}$ and in Fig. \ref{fig:fourpositivedisc}-CENTER for $s=\frac{1}{2}$, both cases exhibit a complex, non-trivial pattern, with regions of both positive and negative Gaussian curvature. 
This is a consequence of the specific choice  of boundary conditions.
In contrast, for a free-standing plate, 
a positive disclination is known to induce deformation with positive Gaussian curvature, while a negative disclination leads to negative Gaussian curvature.
In the present situation, we have homogeneous Dirichlet boundary conditions for the deflection 
$w$, resulting in the condition
\begin{equation}
\int_{\Omega} [w, w] \, d\xi = 0. 
\end{equation}
(To verify this, suffices to consider Eq. \ref{eq:202412131450} with $\phi=\eta=w$ and $\chi=1$).
This implies that the mean Gaussian curvature is zero.


In Figure \ref{fig:fourpositivedisc}-RIGHT we report the color map and the 3D plot of the plate mid-plane.
These benchmarks demonstrate that arrangements of disclinations angles of the same sign are capable of non-trivially distorting the plate and inducing regions of both positive and negative curvature coexisting within the plate.

\subsubsection{Zero total Frank angle}\label{}

We illustrate more realistic configurations with symmetric arrangements of the disclinations where the total Frank angle is zero. Similar flower-like patterns are observed in graphene \cite{Lu13}. Take
\begin{equation}
\label{eq:202501161551}
\theta(\xi) = \delta(\xi) -\frac{1}{4} \delta(\xi-y^{(1)}) - \frac{1}{4} \delta(\xi-y^{(2)}) - \frac{1}{4} \delta(\xi-y^{(3)}) - \frac{1}{4} \delta(\xi-y^{(4)})
\end{equation}
where  $y^{(1)} = (\frac{1}{2}, 0)$, $y^{(2)} = (0, \frac{1}{2})$, $y^{(3)} = (-\frac{1}{2}, 0)$, $y^{(4)} = (0, -\frac{1}{2})$. Solutions for defection and Gaussian curvature are displayed in Figure \ref{fig:202501171258}, showing more complex patterns of alternating regions of positive and negative Gaussian curvature.
\begin{figure}[htbp]
\centering
\includegraphics[width=.8\textwidth]{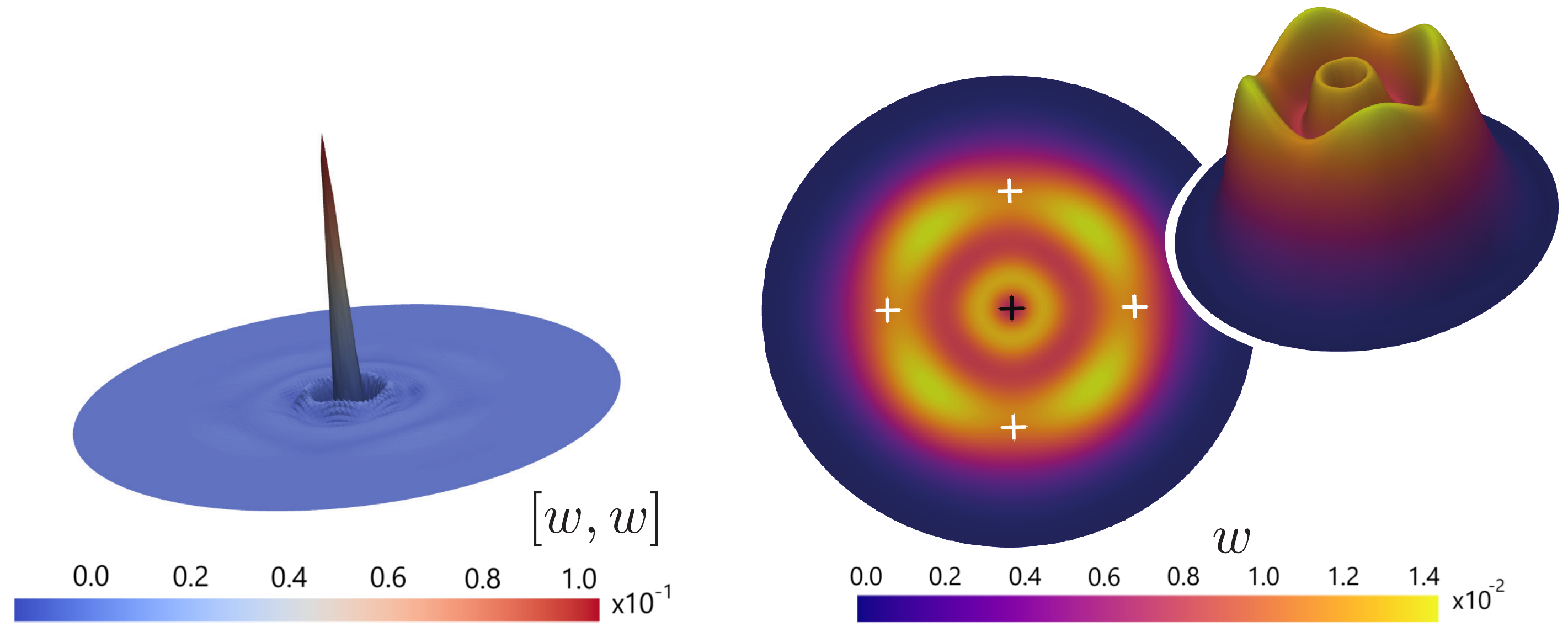} 
\caption{
LEFT: heatmap of $[w,w]$. 
The Gaussian curvature remains continuous and non-singular, even at the origin of the disk. The regularity Theorem \ref{Thm:202420121439} ensures $w\in C^{2,\zeta}$ and hence $[w,w]$ is of class $ C^{0,\zeta}$.
RIGHT: heatmap plot of $w$. 
The black (white) cross show the location of positive (negative) disclinations. 
}
\label{fig:202501171258}
\end{figure}
%
%
%
More solutions, corresponding to reversing the sign in Eq.\eqref{eq:202501161551} 
yielding 
%
\begin{equation}
\label{eq:202501171308}
\theta(\xi) = - \delta(\xi) +\frac{1}{4} \delta(\xi-y^{(1)}) + \frac{1}{4} \delta(\xi-y^{(2)}) + \frac{1}{4} \delta(\xi-y^{(3)}) + \frac{1}{4} \delta(\xi-y^{(4)}),
\end{equation}
are displayed in Figure \ref{fig:side_by_side}.


\begin{figure}[htbp]
\centering
\includegraphics[width=.8\textwidth]{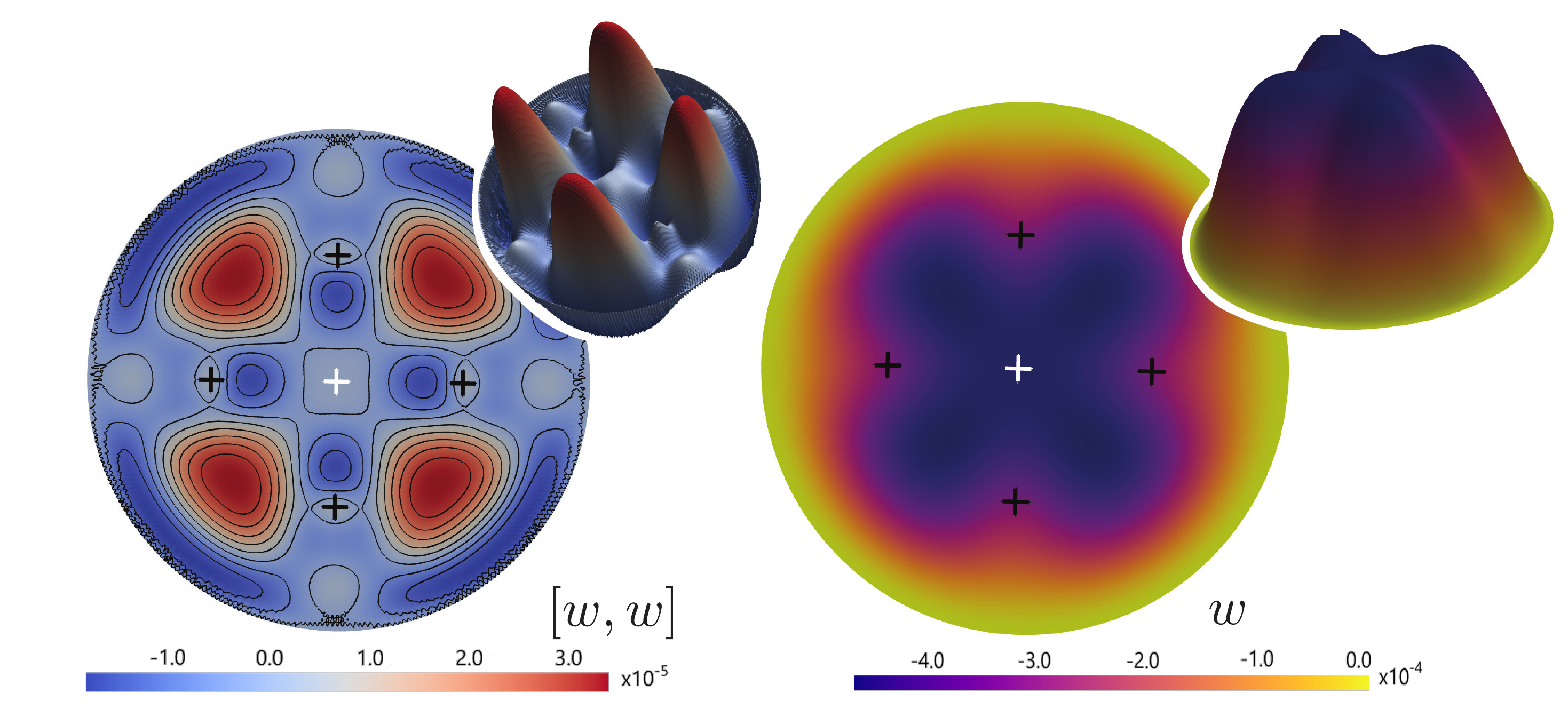} 
\caption{
LEFT: heatmap of $[w,w]$. 
Top and side views. 
RIGHT: heatmap plot of $w$. 
The black (white) cross show the location of positive (negative) disclination.}
\label{fig:side_by_side}
\end{figure}
%
%

\subsection*{Conclusion}



We have presented a variational framework for modeling thin plates with kinematic rotational incompatibility, based on the F\"oppl-von K\'arm\'an theory. A key component of this framework is an existence theorem, adapted from the classical theory by Ciarlet, which is   tailored for situations involving disclinations.   A regularity theorem, also developed following the  approach of Ciarlet, is presented. The theorem ensures the regularity of solutions, even in the presence of folding.
Building on this, we   present a variational formulation of the problem, which is numerically implemented using the Finite Element method. A Discontinuous Galerkin method with
$C^0$ elements is used to solve the F\"oppl-von K\'arm\'an equations.
Our implementation is developed within the FEniCS environment and leverages the variational formulation of the Dirichlet-type boundary condition problem for clamped FvK plates.


To evaluate the predictions based on this model,
 we test our code in various configurations over a wide range of parameters.
We present simulations of plates largely distorted by multi-disclination configurations, inspired by the literature on graphene.
Our simulations show that the presence of same-sign disclinations can lead to deformed plates with complex shapes, wrinkles, and including regions of both positive and negative curvature. More general configurations could be simulated, such as disclinations of opposite signs, dipoles, multipoles, as well as topological defects such as dislocations. These and other more complex configurations will be explored in future studies.

\paragraph{Acknowledgements}
EF is supported by JST SPRING, Grant Number JPMJSP2136.
PC's work is supported by JSPS KAKENHI
Grant-in-Aid for Scientific Research (C) JP24K06797.
PC holds an honorary appointment at La Trobe University and is a member of GNAMPA.
AALB   thanks   the Institute of Mathematics for Industry, an International Joint Usage and Research Center located in Kyushu University, where part of the work contained in this paper was carried out.

\paragraph{Supplementary material.}
Supplementary material for this article is available at \url{https://github.com/kumiori/disclinations}.

\appendix
\numberwithin{equation}{section}
\section{Appendix}
\subsection{Existence of solutions}

We break down the proof of the existence result in a series of propositions, following the lines of the proof given in \cite{ciarlet97}.
The following two propositions are an adaptation of \cite[Theorems 5.8-1, 5.8-2, 5.8-3]{ciarlet97}, incorporating the new operators $F_{\theta}, \Lambda_{\theta}$   to account for Dirac deltas in the Airy potential equation.

\begin{proposition}
	
	\label{prop:202412131015}
	Consider the assumptions of Theorem
	\ref{2408121907}. Then the pair $(\vexact, \wexact) \in H^2_0(\Omega) \times H^2_0(\Omega)$ satisfies \eqref{2406171748} if and only if it satisfies
	\begin{equation}
		\label{eq:202413121014}
		\frac{Eh}{2} C(w) + \big( D I - Eh \Lambda_{\theta} \big) w - F_{p} = 0 \quad \text{and} \quad v = - \frac{Eh}{2} B(w, w) + E h F_{\theta}
		\quad \text{ in } \Omega,
	\end{equation}
	where $I : H^2_0(\Omega) \to H^2_0(\Omega)$ is the identity operator and
	\begin{align*}
		& F_p \in H_0^2(\Omega)                                     &  & \text{such that} \quad \Delta^2 F_p = p \quad \text{in } H^{-2}(\Omega),                        \\
		& F_{\theta} \in H_0^2(\Omega)                              &  & \text{such that} \quad \Delta^2 F_{\theta} = \theta \quad \text{in } H^{-2}(\Omega),            \\
		& B \colon H^2(\Omega) \times H^2(\Omega) \to H^2_0(\Omega) &  & \text{such that} \quad \Delta^2 B(\zeta, \eta) = [\zeta, \eta] \quad \text{in } H^{-2}(\Omega), \\
		& C \colon H_0^2(\Omega) \to H^2_0(\Omega)                  &  & \text{such that} \quad \eta \mapsto B(B(\eta, \eta), \eta),                                     \\
		& \Lambda_{\theta} \colon H_0^2(\Omega) \to H^2_0(\Omega)   &  & \text{such that} \quad \eta \mapsto B(F_{\theta}, \eta).
	\end{align*}
\end{proposition}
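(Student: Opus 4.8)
The plan is to convert the coupled PDE system \eqref{2406171748} into the fixed-point form \eqref{eq:202413121014} by solving the biharmonic operator explicitly. The key tool is that $\Delta^2 : H^2_0(\Omega) \to H^{-2}(\Omega)$ is an isomorphism (by Lax--Milgram applied to the bilinear form $(f,g)_\Delta = \int_\Omega \Delta f \Delta g\,dx$, which is coercive on $H^2_0(\Omega)$ since $\lVert \Delta f \rVert_{L^2}$ is an equivalent norm on $H^2_0(\Omega)$ for a bounded Lipschitz domain). This guarantees that the operators $F_p$, $F_\theta$, $B(\zeta,\eta)$, $C(\eta)$, $\Lambda_\theta(\eta)$ are well-defined; one must also check the right-hand sides lie in $H^{-2}(\Omega)$: for $p$ and $\theta$ this is by hypothesis (recall $\delta \in H^{-2}(\Omega)$ by Sobolev embedding), and for $[\zeta,\eta]$ with $\zeta,\eta \in H^2(\Omega)$ one uses that the Monge--Amp\`ere bracket maps $H^2 \times H^2$ continuously into $H^{-2}$ — this is the standard estimate $\lvert \langle [\zeta,\eta], \varphi \rangle \rvert \le C \lVert \zeta \rVert_{H^2} \lVert \eta \rVert_{H^2} \lVert \varphi \rVert_{H^2}$, obtained by integrating by parts to move one derivative off the bracket and applying the continuous embedding $H^2(\Omega) \hookrightarrow W^{1,4}(\Omega)$.

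The heart of the argument is then purely formal manipulation. First I would show the implication ``\eqref{2406171748} $\Rightarrow$ \eqref{eq:202413121014}''. Applying $(\Delta^2)^{-1}$ to the second equation of \eqref{2406171748} gives, by linearity of $(\Delta^2)^{-1}$ and the defining properties of $B$ and $F_\theta$,
\begin{equation}
	\frac{1}{Eh} v = -\frac{1}{2} B(w,w) + F_\theta,
\end{equation}
which is the second identity in \eqref{eq:202413121014} after multiplying by $Eh$. Substituting this expression for $v$ into the first equation of \eqref{2406171748} and again applying $(\Delta^2)^{-1}$ (recalling $\Delta^2 F_p = p$) yields
\begin{equation}
	D w = (\Delta^2)^{-1}[v,w] + F_p = (\Delta^2)^{-1}\Big[ \big(-\tfrac{Eh}{2} B(w,w) + Eh F_\theta\big),\, w \Big] + F_p.
\end{equation}
Using bilinearity of the bracket and of $B$, together with $\Lambda_\theta(w) = B(F_\theta,w)$ and $C(w) = B(B(w,w),w)$, this rearranges to $D w = -\tfrac{Eh}{2} C(w) + Eh \Lambda_\theta(w) + F_p$, i.e. the first identity in \eqref{eq:202413121014}. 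The reverse implication ``\eqref{eq:202413121014} $\Rightarrow$ \eqref{2406171748}'' is obtained by running the same steps backwards: apply $\Delta^2$ to both identities in \eqref{eq:202413121014} and use that $\Delta^2 B(\zeta,\eta) = [\zeta,\eta]$, $\Delta^2 F_\theta = \theta$, $\Delta^2 F_p = p$, together with the bilinearity of the bracket, to recover \eqref{2406171748}.

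The one subtle point — and the main thing to be careful about rather than a genuine obstacle — is the well-definedness and mapping properties of $B$ when its arguments are only in $H^2(\Omega)$ rather than $H^2_0(\Omega)$: one needs $[\zeta,\eta] \in H^{-2}(\Omega)$, i.e. acting on test functions in $H^2_0(\Omega)$, which is exactly what the integration-by-parts estimate above provides, and then $B(\zeta,\eta) \in H^2_0(\Omega)$ by the isomorphism. In particular $B(w,w)$ makes sense and lies in $H^2_0(\Omega)$, so that the formula $v = -\tfrac{Eh}{2}B(w,w) + EhF_\theta$ automatically delivers $v \in H^2_0(\Omega)$, consistent with the statement; and $C$, $\Lambda_\theta$ are well-defined compositions landing in $H^2_0(\Omega)$. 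I would also remark on the symmetry $\langle [\zeta,\eta],\chi\rangle = \langle[\chi,\eta],\zeta\rangle$ (cited from \cite[Theorem 5.8-2]{ciarlet97}), which is what makes the substitution $[v,w]$ with $v = -\tfrac{Eh}{2}B(w,w)+EhF_\theta$ collapse neatly into the operators $C$ and $\Lambda_\theta$. Everything else is bookkeeping with bilinear maps, so no step should present a serious difficulty.
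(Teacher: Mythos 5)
Your proposal is correct and follows essentially the same route as the paper: the paper expresses the inversion of $\Delta^2$ by noting that the relevant combination solves a homogeneous biharmonic Dirichlet problem in $H^2_0(\Omega)$ and hence vanishes, which is exactly your use of the isomorphism $(\Delta^2)^{-1}$, followed by the same substitution, bilinearity of $B$ and the bracket, and the reverse implication by applying $\Delta^2$. The extra details you supply on the well-definedness and continuity of $B$, $C$, $\Lambda_\theta$ are consistent with what the paper delegates to Ciarlet (Thm.\ 5.8-2) and Proposition \ref{prop:202412131230}.
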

\begin{proof}
	($\Rightarrow$) We now assume $(\vexact, \wexact)$ solves Eq. \eqref{2406171748} and show that the pair also satisfies Eq. \eqref{eq:202413121014}. By definition of $B$, $F_\theta$ and by linearity of bilaplacian operator, Eq. \eqref{2406171748}-(2) can be reformulated in terms of the following Dirichlet problem
	\begin{equation}
		\label{eq:202412131034}
		\begin{cases}
			\Delta^2 ( \frac{1}{Eh} \vexact + \frac{1}{2} B(\wexact, \wexact) -
			F_{\theta}) = 0 \text{ in } H^{-2}(\Omega) \\
			\big( \frac{1}{Eh} v + \frac{1}{2}B(\wexact, \wexact) - F_{\theta} \big) \in H^2_0(\Omega),
		\end{cases}
	\end{equation}
	whose unique solution is the trivial solution; this proves the identity \eqref{eq:202413121014}-(2). Analogously, Eq. \eqref{2406171748}-(1) can be written as
	\begin{equation}
		\label{eq:202412131037}
		\begin{cases}
			\Delta^2 ( D \wexact - B(\vexact, \wexact) -
			F_{p}) = 0 \text{ in } H^{-2}(\Omega) \\
			\big( D \wexact - B(\vexact, \wexact) -
			F_{p} \big) \in H^2_0(\Omega),
		\end{cases}
	\end{equation}
	whence $D \wexact = B(\vexact, \wexact) + F_{p}$ in $\Omega$. Plugging identity \eqref{eq:202413121014} into it and exploiting the bilinearity of $B$ (which holds in view of the bilinearity of the Monge-Amp\`ere form) one obtains
	\begin{align}
		D \wexact = - \frac{Eh}{2} B(B(\wexact, \wexact), \wexact) + Eh B(F_\theta, \wexact) +  F_{p}.
	\end{align}
	The claim follows by the definitions of the operators $C$ and $\Lambda_{\theta}$. \\
	($\Leftarrow$) Conversely, we assume now $(\vexact, \wexact)$ solves Eq. \eqref{eq:202413121014} and show that the pair also satisfies Eq. \eqref{2406171748}. Combining Eq. \eqref{eq:202413121014}-(1) and (2) we obtain $D \wexact = B(\vexact, \wexact) +  F_{p}$, taking now the bilaplacian on both sides one   obtains Eq. \eqref{2406171748}-(1). Analogously, taking the bilaplacian operator on both sides of \eqref{eq:202413121014}-(2) one readily obtain \eqref{2406171748}-(2). 
\end{proof}
\begin{proposition}[properties of the operators $B$, $C$ and $\Lambda_{\theta}$]

	\label{prop:202412131230}
	Let $\chi$ in $H^2(\Omega)$, $\phi$, $\eta$ $ \in H_0^2(\Omega)$, $(\phi_k)_{k \in \mathbb{N}} \in H_0^2(\Omega)$ and assume $\phi_k \rightharpoonup \phi $ weakly in $H^2(\Omega)$. Then the following properties hold
	\begin{align}
		\label{eq:1b}
		(i) \quad   & (B(\chi, \phi), \eta)_{\Delta} = (B(\chi, \eta), \phi)_{\Delta}, \notag                                                                                                            \\
		(ii) \quad  & (C(\phi), \phi)_{\Delta} = (B(\phi, \phi), B(\phi, \phi))_{\Delta} \ge 0,  \notag                                                                                                  \\
		(iii) \quad & (C(\phi), \phi)_{\Delta} = 0 \iff \phi \equiv 0,                                                                                                                                   \\
		(iv) \quad  & (\Lambda_{\theta}\phi, \eta)_{\Delta} = (\Lambda_{\theta}\eta, \phi)_{\Delta}, \notag                                                                                              \\
		(v) \quad   & C(\phi_k) \rightarrow C(\phi) \text{ and } \Lambda_{\theta} \phi_k \rightarrow \Lambda_{\theta} \phi \text{ strongly in } H^2(\Omega) \notag \quad \text{(sequential compactness).}
	\end{align}

\end{proposition}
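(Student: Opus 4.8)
The plan is to establish the five properties in sequence, leveraging the fact that $B$ is defined implicitly through a biharmonic Dirichlet problem, so that $(B(\zeta,\eta),\psi)_\Delta = \langle [\zeta,\eta], \psi\rangle$ for $\psi\in H^2_0(\Omega)$ by integration by parts against the fundamental relation $\Delta^2 B(\zeta,\eta)=[\zeta,\eta]$. With this reformulation, property (i) reduces to the symmetry of the trilinear form $(\zeta,\eta,\psi)\mapsto\int_\Omega [\zeta,\eta]\,\psi\,dx$ under permutation of the last two arguments, which is exactly \cite[Theorem 5.8-2]{ciarlet97} (the same identity already invoked in the proof of Theorem~\ref{2412291203}). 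For property (ii), I would write $(C(\phi),\phi)_\Delta = (B(B(\phi,\phi),\phi),\phi)_\Delta$ and apply (i) with $\chi = B(\phi,\phi)$, $\eta=\phi$ to move one factor, obtaining $(B(B(\phi,\phi),\phi),\phi)_\Delta = (B(B(\phi,\phi),\phi),\phi)_\Delta$; more carefully, $(B(\chi,\phi),\phi)_\Delta = (B(\chi,\phi),\phi)_\Delta$ and then a second application yields $(B(\phi,\phi),B(\phi,\phi))_\Delta = \|\Delta B(\phi,\phi)\|_{L^2}^2 \ge 0$. Property (iv) is identical to (i) with $\chi$ replaced by the fixed element $F_\theta\in H^2_0(\Omega)$, since $\Lambda_\theta\phi = B(F_\theta,\phi)$.

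\textbf{Property (iii).} The nontrivial direction is that $(C(\phi),\phi)_\Delta = 0$ forces $\phi\equiv 0$. From (ii) this means $\Delta B(\phi,\phi) = 0$ in $L^2(\Omega)$, and since $B(\phi,\phi)\in H^2_0(\Omega)$ solves $\Delta^2 B(\phi,\phi)=[\phi,\phi]$ with zero boundary data, $\Delta B(\phi,\phi)=0$ gives $B(\phi,\phi)\equiv 0$, hence $[\phi,\phi] = 2\det(\nabla^2\phi) = 0$ a.e. in $\Omega$. The point is then that a function in $H^2_0(\Omega)$ with vanishing Hessian determinant must vanish: this is a known rigidity fact — the surface $(x,\phi(x))$ has zero Gaussian curvature, is developable, and the clamped boundary condition $\phi=\partial_n\phi=0$ on a connected boundary forces $\phi\equiv 0$. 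I would cite the corresponding step in \cite[Theorem 5.8-3]{ciarlet97}, which treats exactly this implication; alternatively one argues that $\det\nabla^2\phi=0$ with $\phi\in H^2_0$ implies $\nabla\phi$ lies on a line through the origin in $\mathbb{R}^2$ after using the boundary trace, then integrate.

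\textbf{Property (v), the compactness step.} This is where the real work lies. Assume $\phi_k\rightharpoonup\phi$ weakly in $H^2(\Omega)$. I would first invoke Rellich--Kondrachov: the embedding $H^2(\Omega)\hookrightarrow W^{1,4}(\Omega)$ is compact (in dimension two), so $\nabla\phi_k\to\nabla\phi$ strongly in $L^4(\Omega;\mathbb{R}^2)$, and also $\phi_k\to\phi$ strongly in, say, $C^0(\overline\Omega)$. The Monge--Amp\`ere bracket $[\zeta,\eta]$ is bilinear in the second derivatives, so to control $B(\phi_k,\phi_k)-B(\phi,\phi)$ I would use the continuity estimate $\|B(\zeta,\eta)\|_{H^2_0}\le C\|[\zeta,\eta]\|_{H^{-2}}$ together with the key bound $\|[\zeta,\eta]\|_{H^{-2}(\Omega)}\le C\|\nabla\zeta\|_{L^4}\|\nabla\eta\|_{L^4}$, which follows by writing $\langle[\zeta,\eta],\psi\rangle$ as an integral of $\cof(\nabla^2\zeta):\nabla\eta\otimes\nabla\psi$-type terms after one integration by parts (this is \cite[Theorem 5.8-1]{ciarlet97}). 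Expanding bilinearly, $[\phi_k,\phi_k]-[\phi,\phi] = [\phi_k-\phi,\phi_k]+[\phi,\phi_k-\phi]$, so $\|[\phi_k,\phi_k]-[\phi,\phi]\|_{H^{-2}}\le C\|\nabla(\phi_k-\phi)\|_{L^4}\big(\|\nabla\phi_k\|_{L^4}+\|\nabla\phi\|_{L^4}\big)\to 0$ using the strong $L^4$-gradient convergence and the uniform bound on $\|\nabla\phi_k\|_{L^4}$ (from weak $H^2$ convergence). Hence $B(\phi_k,\phi_k)\to B(\phi,\phi)$ strongly in $H^2(\Omega)$. Then $C(\phi_k)=B(B(\phi_k,\phi_k),\phi_k)$: since $B(\phi_k,\phi_k)\to B(\phi,\phi)$ strongly and $\phi_k\rightharpoonup\phi$ weakly (in particular bounded in $H^2$, with $\nabla\phi_k\to\nabla\phi$ strongly in $L^4$), the same bilinear estimate gives $C(\phi_k)\to C(\phi)$ strongly in $H^2(\Omega)$. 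Finally $\Lambda_\theta\phi_k = B(F_\theta,\phi_k)$, with $F_\theta$ fixed: again $\|B(F_\theta,\phi_k)-B(F_\theta,\phi)\|_{H^2}\le C\|\nabla F_\theta\|_{L^4}\|\nabla(\phi_k-\phi)\|_{L^4}\to 0$, giving strong convergence. The main obstacle is organizing these bilinear estimates cleanly — in particular verifying the $\|[\zeta,\eta]\|_{H^{-2}}\le C\|\nabla\zeta\|_{L^4}\|\nabla\eta\|_{L^4}$ bound with the correct integration-by-parts (which requires $\zeta,\eta$ or the test function to have enough boundary vanishing) — but all the necessary ingredients are already in \cite[§5.8]{ciarlet97} and carry over verbatim, the presence of $\theta$ only entering through the fixed element $F_\theta\in H^2_0(\Omega)$.
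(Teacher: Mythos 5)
Your proof is correct and follows essentially the same route as the paper, which simply defers (i)--(iii) to \cite[Thm.~5.8-2]{ciarlet97}, obtains (iv) from (i) with $\chi=F_\theta$, and deduces (v) from the sequential compactness of $B$ --- exactly the ingredients you spell out (compact embedding $H^2(\Omega)\hookrightarrow W^{1,4}(\Omega)$ plus the bilinear $H^{-2}$ estimate), with the disclinations entering only through the fixed element $F_\theta\in H^2_0(\Omega)$. Two cosmetic slips worth fixing: in (ii) your displayed intermediate identities are tautologies --- the actual step is the trilinear symmetry $\int_\Omega[B(\phi,\phi),\phi]\,\phi\,dx=\int_\Omega[\phi,\phi]\,B(\phi,\phi)\,dx$ followed by testing the definition of $B(\phi,\phi)$ against $B(\phi,\phi)$ itself --- and in the bound $\|[\zeta,\eta]\|_{H^{-2}}\le C\|\nabla\zeta\|_{L^4}\|\nabla\eta\|_{L^4}$ the Hessian must land on the test function, i.e.\ one uses $\int_\Omega[\zeta,\eta]\,\psi\,dx=-\int_\Omega\cof(\nabla^2\psi):(\nabla\zeta\otimes\nabla\eta)\,dx$ as in Lemma~\ref{lemma:202412131500}, not a term involving $\cof(\nabla^2\zeta)$.
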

\begin{proof}
	A proof of (i), (ii), (iii) can be found in \cite{ciarlet97}, Thm. 5.8-2. (iv) can be obtained from (i), (v) is a direct consequence of the sequential compactness of the operator $B$ (see \cite{ciarlet97}, Thm. 5.8-2). 
\end{proof}

Finally, we are in a position to  prove the existence of solutions to FvK equations in the presence of wedge disclinations.

\begin{proof}[Proof of Theorem \ref{2408121907} (Existence)]\label{thm:202412201440}
	By Proposition \ref{prop:202412131015} we only need to show that there exists some $\wexact \in H^2_0(\Omega)$ solution to Eq. \eqref{eq:202413121014}-(1). We proceed via Direct Method of Calculus of Variations by showing that the functional $\mathcal{J} : H^2_0(\Omega) \rightarrow \mathbb{R} $, defined as 
	\begin{equation}
		\label{eq202412131200}
		\mathcal{J}(w) := \frac{Eh}{8} \big( C(w), \eta \big)_{\Delta} + \frac{D}{2} \big( w , w \big)_{\Delta} - \frac{Eh}{2} \left( \Lambda_{\theta} w , w \right)_{\Delta} - \big( F_{p}, w \big)_{\Delta}
	\end{equation}
	whose Euler-Lagrange equation is precisely Eq. \eqref{eq:202413121014}-(1), attains its minimum in $H^2_0(\Omega)$. Because $(\cdot, \cdot)_{\Delta}$ and $B(\cdot, \cdot)$ are continuous bilinear operators, they are Fréchet differentiable and, as a consequence, so is $\mathcal{J}$. Simple calculations show that the Euler-Lagrange equation of $\mathcal{J}$ is  indeed Eq. \eqref{eq:202413121014}-(1).
	To show the coercivity of $\mathcal{J}$ in $H^2_0(\Omega)$ we follow \cite{ciarlet97} and argue by contradiction. Consider a sequence $(w_k)_{k \in \mathbb{N}}$  and assume there exists $M_1 > 0$ such that $\lim_{k \to \infty} \mathcal{J}(w_k) < M_1 $ when $\lVert w_k \rVert_{H^2(\Omega)} \to +\infty$. Define the rescaled sequence $(\zeta_k)_{k\in \mathbb{N}}$ as $\zeta_k := w_k / \lVert \Delta w_k \rVert_{L^2(\Omega)}$. Then $(\zeta_k)_{k\in \mathbb{N}}$ is bounded in $H^2(\Omega)$-norm and, up to some subsequence $(\zeta_{k_\ell})_{\ell \in \mathbb{N}}$, it converges weakly to some $\zeta_{\infty} \in$  $H_0^2(\Omega)$. For $\ell$ big enough we can assume $\lVert \Delta w_{k_{\ell}} \rVert_{L^2(\Omega)} \ne 0 $, then $\frac{\mathcal{J}(w_{k_{\ell}})}{\lVert \Delta w_{k_{\ell}} \rVert^{2}_{L^2(\Omega)}} \le \frac{M_1}{\lVert \Delta w_{k_{\ell}} \rVert^{2}_{L^2(\Omega)}}$ reads as
	\begin{equation}
		\label{eq:202412261034}
		\frac{Eh}{8} \lVert \Delta w_{k_{\ell}} \rVert^2_{L^2(\Omega)} \left( C(\zeta_{k_{\ell}}), \zeta_{k_{\ell}} \right)_{\Delta} + \frac{D}{2} - Eh \left( \Lambda_{\theta} \zeta_{k_{\ell}} , \zeta_{k_{\ell}} \right)_{\Delta} - \frac{\left( F_{p}, \zeta_{k_{\ell}} \right)_{\Delta}}{\lVert \Delta w_{k_{\ell}} \rVert_{L^2(\Omega)}} \le \frac{M_1}{{\lVert \Delta w_{k_{\ell}} \rVert^{2}_{L^2(\Omega)}}}
	\end{equation}
	where we used that for every $\alpha \in \mathbb{R}$ it holds: $C(\alpha w) = \alpha^3 C(w)$ and $\Lambda_{\theta} \alpha w $ $= \alpha \Lambda_{\theta} w $. By Proposition \ref{prop:202412131230}-(ii) $\left( C(\zeta_{k_{\ell}}), \zeta_{k_{\ell}} \right)_{\Delta}$ is always non-negative. Take $\liminf_{\ell \to \infty} (\cdot)$ on both sides of \eqref{eq:202412261034} and suppose first $\liminf_{\ell \to \infty} \left( \lVert \Delta w_{k_{\ell}} \rVert^2_{L^2(\Omega)} \left( C(\zeta_{k_{\ell}}), \zeta_{k_{\ell}} \right)_{\Delta} \right)$ diverges.
	This leads to a contradiction since the remaining terms of the LHS of \eqref{eq:202412261034} converge to some real number, and the RHS of \eqref{eq:202412261034} converges to zero. Suppose hence
	\begin{equation}
		\liminf_{\ell \to \infty} \left( \lVert \Delta w_{k_{\ell}} \rVert^2_{L^2(\Omega)} \left( C(\zeta_{k_{\ell}}), \zeta_{k_{\ell}} \right)_{\Delta} \right) \le M_2
	\end{equation}
	for some $M_2 \ge 0$, then $ \left( C(\zeta_{k_{\ell}}), \zeta_{k_{\ell}} \right)_{\Delta}$ converges (by virtue of Lemma \eqref{prop:202412131230}-(v)) to zero; by Lemma \ref{prop:202412131230}-(c) $\zeta_{\infty} \equiv 0$, which in turn implies that $\left( \Lambda_{\theta} \zeta_{k_{\ell}} , \zeta_{k_{\ell}} \right)_{\Delta} \to 0$ when $\ell \to \infty$. Hence
	\begin{equation}
		\liminf_{\ell \to \infty} \left( \lVert \Delta w_{k_{\ell}} \rVert^2_{L^2(\Omega)} \left( C(\zeta_{k_{\ell}}), \zeta_{k_{\ell}} \right)_{\Delta} \right) + \frac{D}{2} \le 0
	\end{equation}
	which shows the contradiction. The weak lower semi-continuity of $\mathcal{J}$ is guaranteed by Proposition \ref{prop:202412131230}-(v). By Direct Method of the Calculus of Variations, $\mathcal{J}(w)$ attains its minimum in $H^2_0(\Omega)$. 
\end{proof}

\subsection{Regularity of solutions}

The proof we present follows the lines of \cite[Theorem  5.8-4]{ciarlet97}, which  we extend to the case at hand. \\\\
\begin{proof}[Theorem \ref{Thm:202420121439} (Regularity)]
	We start by studying the regularity of $w$. By assumption $v$, $w$ belong to $H_0^2(\Omega)$. Then $[w,v] \in L^1(\Omega)$ and, as a consequence $\Delta^2 w$ $\in L^1(\Omega)$. For $\epsilon \in (0,1)$ it holds $L^1(\Omega) \hookrightarrow H^{-1-\epsilon}(\Omega)$ where the symbol $\hookrightarrow$ denotes the continuous embedding (see \cite{ciarlet97}, Thm. 4.6-3, page 291) and $H^{-1-\epsilon}(\Omega)$$:= (H_0^{1+\epsilon}(\Omega))^{'} $, $H^{1+\epsilon}_0(\Omega) := \overline{C^{\infty}_0(\Omega)}^{\lVert \cdot \rVert_{H^{1+\epsilon}}} $ (see for example \cite{HitchhikerGuide}). Then $\Delta^2 w \in H^{-1-\epsilon}(\Omega)$ and \color{black} for $\overline{\epsilon} \in (1/2, 1)$ (see \cite[Thm. 6]{savare}) \color{black} $w \in H^{3-\overline{\epsilon}}(\Omega) \cap H_0^2(\Omega)$.
	Then $\nabla^2 w \in H^{1-\overline{\epsilon}}(\Omega, \mathbb{R}^{2\times2})$, and from the theory of continuous embedding in fractional Sobolev spaces (see \cite[Thm. 6.7]{HitchhikerGuide}), we have $H^{1-\overline{\epsilon}}(\Omega, \mathbb{R}^{2\times2}) \hookrightarrow L^{2/ \overline{\epsilon}}(\Omega, \mathbb{R}^{2\times2})$, which implies, by H\"older inequality, that $[v,w] \in L^{2 /(1+ \overline{\epsilon} )}(\Omega)$. Then $\Delta^2 w \in L^{2 /(1+ \overline{\epsilon} )}(\Omega)$, and by the theory of $L^p$-regularity for biharmonic problems, $w \in W^{4, 2 / ( 1+ \overline{\epsilon} )}(\Omega)$ (see \cite[Thm. 2.20]{Gazzola2010PolyharmonicBV}). By standard Sobolev continuous embeddings $w \in W^{4, 2 / ( 1+ \overline{\epsilon} ) }(\Omega) \hookrightarrow C^{2, 1-\overline{\epsilon}}(\overline{\Omega})$ (\cite[Thm. 2.6]{Gazzola2010PolyharmonicBV}), hence $\nabla^2 w \in C^{0, 1-\overline{\epsilon}}(\overline{\Omega}, \mathbb{R}^{2\times2})$ and by H\"older inequality $[v, w] \in L^2(\Omega)$, then $\Delta^2 w \in L^2(\Omega)$. In light of \cite[Thm. 2.20]{Gazzola2010PolyharmonicBV} we have $w \in H^4(\Omega)$. \\
	We now turn  our attention to the regularity of $v$. Define the function $v_\text{p}: \overline{\Omega} \to \mathbb{R}$ as
	\begin{equation}
		v_\text{p}(x) := \sum_{i = 1}^N s_i \mathcal{G}(x;y^{(i)}) \quad \text{where} \quad
		\mathcal{G}(x;y^{(i)}) :=
		\begin{cases}
			\sum\limits_{i = 1}^{N} \frac{1}{16\pi} \left|x-y^{(i)}\right|^2 \displaystyle \ln\left(|x-y^{(i)}|^2\right) \quad &\text{for every } x \ne y^{(i)} \\
			0 \quad &\text{otherwise }
		\end{cases}
	\end{equation}
	It can be verified that $\Delta^2 v_{\text{p}} = \theta $ and, owing to \cite[Example 4.25]{Demengel2012}, $v_{\text{p}} \in W^{2,\tau}(\Omega) \cap C^{\infty}(\Omega \setminus \cup_{i=1}^{N} B_{r}(y^{(i)}))$ for every $\tau \in [1,\infty)$ and every $r > 0$ such that $\overline{B_{r}(y^{(i)})} \subset \Omega$ for every $i \in \{1, \hdots, N\}$. 
	Define now $v_{\text{e}} := v - v_{\text{p}}$. Then, $v_{\text{e}}$ satisfies the following Dirichlet problem 
	\begin{equation}
		\label{eq:202412111549}
		\begin{cases}
			\frac{1}{Eh} \Delta^2 v_{\text{e}} = -\frac{1}{2}[w,w] \quad &\text{ in } \Omega \\
			v_{\text{e}} = - v_{\text{p}} \quad &\text{ on } \partial \Omega \\
			\partial_n v_{\text{e}} = - \partial_n v_{\text{p}} \quad &\text{ on } \partial \Omega
		\end{cases}
	\end{equation}
	where, 
	in view of the Sobolev embeddings $H^4(\Omega) \hookrightarrow C^{2, s}(\overline{\Omega})$ for every
	$s \in (0,1)$, we have $w \in C^{2,s}(\overline{\Omega})$ and   $[w,w] \in C^{0,s}(\overline{\Omega})$. By Schauder regularity (see \cite[section 2.5.1, Thm. 2.19]{Gazzola2010PolyharmonicBV}) we conclude that $v_{\text{e}} \in C^{4,\gamma}(\overline{\Omega})$ 
	\color{black} and, in turn that $v \in W^{2,\tau}(\Omega) \cap C^{4,\gamma}(\Omega \setminus \cup_{i=1}^{N} B_{r}(y^{(i)})) $. \\
	We may reconsider now the regularity of $w$. By H\"older inequality $[v, w] \in L^{\tau }(\Omega)$ and therefore, by \cite[Thm. 2.6]{Gazzola2010PolyharmonicBV}, $w \in W^{4,k}(\Omega)$.
\end{proof}

\subsection{Property of the Monge-Amp\`ere bracket}

\begin{lemma}
	\label{lemma:202412131500}
	Let $\Omega \subset \mathbb{R}^2$ as in Theorem \ref{2408121907}, $\phi \in H^2_0(\Omega)$ and $\chi, \eta \in H^2(\Omega)$. Then the following identities hold
	\begin{equation}
		\label{eq:202412131450}
		\int_{\Omega} \cof(\nabla^2 \phi) : (\nabla \chi \otimes \nabla \eta) \, dx =
		- \int_{\Omega} [\chi,\eta] \, \phi \, dx = - \int_{\Omega} [\phi,\eta] \, \chi \, dx.
	\end{equation}
\end{lemma}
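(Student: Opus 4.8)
The plan is to establish the two equalities by integration by parts, exploiting the divergence-free (row-wise) structure of the cofactor matrix of a Hessian. First I would record the key algebraic fact: for $\phi \in H^2(\Omega)$ the matrix field $\cof(\nabla^2 \phi)$ is row-wise divergence-free, i.e. $\partial_j (\cof(\nabla^2\phi))_{ij} = 0$ in the sense of distributions, since $(\cof \nabla^2\phi)_{11}=\phi_{,22}$, $(\cof\nabla^2\phi)_{22}=\phi_{,11}$, $(\cof\nabla^2\phi)_{12}=(\cof\nabla^2\phi)_{21}=-\phi_{,12}$, and the relevant combinations of third derivatives cancel by equality of mixed partials. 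This identity holds for $H^2$ functions by a standard density/approximation argument (approximate $\phi$ by smooth functions in $H^2$, use continuity of all bilinear pairings involved).

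Next I would carry out the integration by parts. Writing the integrand as $(\cof \nabla^2\phi)_{ij}\,\partial_i\chi\,\partial_j\eta$, I integrate by parts moving the derivative off $\partial_i \chi$: since $\phi \in H^2_0(\Omega)$ one expects the boundary terms to vanish (this is where the hypothesis $\phi \in H^2_0$ rather than merely $H^2$ is used — $\phi$ and $\nabla\phi$ have vanishing trace), leaving
\[
\int_\Omega (\cof\nabla^2\phi)_{ij}\,\partial_i\chi\,\partial_j\eta\,dx = -\int_\Omega \partial_i\big((\cof\nabla^2\phi)_{ij}\,\partial_j\eta\big)\,\chi\,dx = -\int_\Omega (\cof\nabla^2\phi)_{ij}\,\partial_i\partial_j\eta\,\chi\,dx,
\]
where the term with $\partial_i (\cof\nabla^2\phi)_{ij}$ dropped out by the divergence-free property. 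Recognizing $(\cof\nabla^2\phi):\nabla^2\eta = [\phi,\eta]$ gives the second equality $\int_\Omega \cof(\nabla^2\phi):(\nabla\chi\otimes\nabla\eta)\,dx = -\int_\Omega [\phi,\eta]\,\chi\,dx$. For the first equality, I would instead integrate by parts moving the derivative off $\partial_j\eta$, again killing boundary terms via $\phi\in H^2_0$ and the divergence term via the row-wise divergence-free identity, to obtain $-\int_\Omega (\cof\nabla^2\phi):(\nabla^2\chi)\,\eta\,dx = -\int_\Omega [\phi,\chi]\,\eta\,dx = -\int_\Omega[\chi,\phi]\,\eta\,dx$ by symmetry of the bracket; equivalently one can symmetrize the roles of $\chi$ and $\eta$ directly.

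The main obstacle is the rigorous justification at low regularity: with $\phi\in H^2_0(\Omega)$ and $\chi,\eta\in H^2(\Omega)$, the products $(\cof\nabla^2\phi)_{ij}\partial_i\chi\partial_j\eta$ and $[\phi,\eta]\chi$ are only guaranteed to be in $L^1$ (using $\nabla\chi,\nabla\eta\in H^1\hookrightarrow L^q$ for all $q<\infty$ in 2D, and the $H^2\hookrightarrow C^0$ embedding for the undifferentiated factor), so the integration-by-parts steps and the vanishing of boundary terms are not literally legitimate termwise and must be obtained by density. I would therefore prove the identity first for $\phi\in C^\infty_c(\Omega)$ and $\chi,\eta\in C^\infty(\overline\Omega)$, and then pass to the limit: all three multilinear forms appearing are continuous on $H^2_0(\Omega)\times H^2(\Omega)\times H^2(\Omega)$ with respect to the relevant norms (again by the $2$D Sobolev embeddings $H^2\hookrightarrow W^{1,q}\hookrightarrow C^0$), so the equality persists. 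This is essentially the content cited from \cite[Theorem 5.8-2]{ciarlet97} for the case of two arguments, extended here to three.
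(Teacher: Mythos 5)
Your overall strategy (divergence-free structure of $\cof(\nabla^2\phi)$, integration by parts for smooth $\phi\in C^\infty_c(\Omega)$, $\chi,\eta\in C^\infty(\overline\Omega)$, then density and continuity of the trilinear forms via $H^2\hookrightarrow W^{1,4}\hookrightarrow C^0$) is the same as the paper's, and it correctly yields the identity $\int_\Omega \cof(\nabla^2\phi):(\nabla\chi\otimes\nabla\eta)\,dx=-\int_\Omega[\phi,\eta]\,\chi\,dx$, i.e.\ the \emph{second} equality of the lemma (and, by swapping which gradient you integrate by parts, also $-\int_\Omega[\phi,\chi]\,\eta\,dx$). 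However, there is a genuine gap in your treatment of the \emph{first} equality. What the lemma asserts there is $-\int_\Omega[\chi,\eta]\,\phi\,dx$, in which $\phi$ has been moved entirely out of the Monge--Amp\`ere bracket and appears as the undifferentiated factor. Your argument produces $-\int_\Omega[\phi,\chi]\,\eta\,dx$ and then invokes ``symmetry of the bracket''; but the pointwise symmetry $[\phi,\chi]=[\chi,\phi]$ only gives $-\int_\Omega[\chi,\phi]\,\eta\,dx$, which is not the claimed quantity. The identity $\int_\Omega[\phi,\chi]\,\eta\,dx=\int_\Omega[\chi,\eta]\,\phi\,dx$ is an \emph{integral} (trilinear) symmetry that itself requires a further integration by parts together with boundary conditions; it is exactly the content of Ciarlet's Theorem 5.8-2, which the paper cites at this point. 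To close the gap you should either cite/prove that symmetry, or, at the smooth level, integrate by parts \emph{twice}, moving both derivatives off $\nabla^2\phi$ onto the product $\partial_i\chi\,\partial_j\eta$; a direct computation shows $\partial_{22}(\partial_1\chi\,\partial_1\eta)+\partial_{11}(\partial_2\chi\,\partial_2\eta)-\partial_{12}(\partial_1\chi\,\partial_2\eta+\partial_2\chi\,\partial_1\eta)=-[\chi,\eta]$, which gives $-\int_\Omega[\chi,\eta]\,\phi\,dx$ directly, and the limit passage is the same as for the other equality.

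A secondary, repairable imprecision: you justify the vanishing of the boundary terms by saying that $\phi$ and $\nabla\phi$ have zero trace, but the boundary integrals arising in your integrations by parts involve $\cof(\nabla^2\phi)$ on $\partial\Omega$, i.e.\ second derivatives of $\phi$, whose traces do not vanish for a general $\phi\in H^2_0(\Omega)$. This is harmless in your final rigorous scheme, because for the approximants $\phi_k\in C^\infty_c(\Omega)$ all terms vanish near $\partial\Omega$ by compact support and the identity then passes to the limit; but the statement as written should not be used as the reason the boundary terms disappear at the $H^2_0$ level.
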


\begin{proof}
	Let $(\phi_k)_{k \in \mathbb{N}} \in C^{\infty}_0(\Omega)$, $(\chi_\ell)_{\ell \in \mathbb{N}}$, $(\eta_m)_{m \in \mathbb{N}}$ $\in C^{\infty}(\overline{\Omega})$. Then
	\begin{equation*}
		\int_{\Omega} \cof(\nabla^2 \phi_k) : (\nabla \chi_\ell \otimes \nabla \eta_m) \, dx = \int_{\Omega} \cof(\nabla^2 \phi_k) \nabla \chi_\ell \cdot \nabla \eta_m \, dx =
	\end{equation*}
	\begin{equation*}
		= \int_{\partial \Omega} \eta_m \, \cof(\nabla^2 \phi_k) \nabla \chi_\ell \cdot n \, d\mathcal{H}^1 - \int_{\Omega} \eta_m \, \text{div} \left( \cof(\nabla^2 \phi_k) \nabla \chi_\ell \right) \, dx =
	\end{equation*}
	\begin{equation*}
		= \int_{\partial \Omega} \eta_m \, \cof(\nabla^2 \phi_k) n \cdot \nabla \xi_\ell \, d\mathcal{H}^1 - \int_{\Omega} \eta_m \, \text{div} \left( \cof(\nabla^2 \phi_k) \right) \cdot \nabla \chi_\ell \, dx - \int_{\Omega} \eta_m \, \cof(\nabla^2 \phi_k) : \nabla^2 \chi_\ell \, dx.
	\end{equation*}
	Since $\varphi_k \in C^{\infty}_0(\Omega)$ the boundary integral vanishes; simple calculations show that $\text{div} \left( \cof(\nabla^2 \phi_k) \right) = 0$. Then
	\begin{equation}
		\label{eq:202412131430}
		\int_{\Omega} \cof(\nabla^2 \phi_k) : (\nabla \chi_{\ell} \otimes \nabla \eta_{m}) \, dx = - \int_{\Omega} [\phi_k, \chi_{\ell}] \eta_{m} \, dx.
	\end{equation}
	We proceed now by density of $C^{\infty}(\overline{\Omega})$ and $C_0^{\infty}(\Omega)$ in $H^2(\Omega)$ and $H^2_0(\Omega)$, respectively, (see \cite[Theorem 3.18]{Adams1975}).
	Take the limit of $\chi_\ell \to \chi$ strongly in $H^2(\Omega)$ as $\ell\to\infty$ yielding
	\begin{equation}
		\label{eq:202412251230}
		\int_{\Omega} \cof(\nabla^2 \phi_k) : (\nabla \chi \otimes \nabla \eta_{m}) \, dx = - \int_{\Omega} [\phi_k, \chi] \eta_{m} \, dx.
	\end{equation}
	Then, take the limit $\eta_m \to \eta$ strongly in $H^2(\Omega)$ as $m\to\infty$ to obtain
	\begin{equation}
		\label{eq:202412251240}
		\int_{\Omega} \cof(\nabla^2 \phi_k) : (\nabla \chi \otimes \nabla \eta) \, dx = - \int_{\Omega} [\phi_k, \chi] \eta \, dx.
	\end{equation}
	Finally, let $\phi_k \to \phi$ strongly in $H^2(\Omega)$. In view of the continuous embedding $H^2(\Omega) \hookrightarrow W^{1,4}(\Omega)$, the term $(\nabla \chi \otimes \nabla \eta) \in L^2(\Omega, \mathbb{R}^{2\times2})$ and we obtain
	\begin{equation}
		\label{eq:202412251242}
		\int_{\Omega} \cof(\nabla^2 \phi) : (\nabla \chi \otimes \nabla \eta) \, dx = - \int_{\Omega} [\phi, \chi] \eta \, dx.
	\end{equation}
	To conclude the proof, we use the symmetry of $\int_{\Omega} [\phi, \chi] \,\eta \, dx$, that is,  $\int_{\Omega} [\phi, \chi] \,\eta \, dx = $ $\int_{\Omega} [\phi, \eta] \, \chi \, dx = $ $\int_{\Omega} [\chi, \eta] \, \phi \, dx$, see \cite[Theorem 5.8-2]{ciarlet97}.

\end{proof}

\bibliographystyle{unsrt}
\bibliography{MAIN.bib}

\end{document}